\newcommand{\ga}{\gamma}
\newcommand{\la}{\lambda}
\def\be{\begin{equation}}
\def\ee{\end{equation}}
\numberwithin{equation}{section}
\definecolor{backcolour}{rgb}{0.95,0.95,0.92}
\lstdefinestyle{mystyle}{
    backgroundcolor=\color{backcolour},   
}
\newtheorem{theorem}{Theorem}[section]
\newtheorem{lemma}[theorem]{Lemma}
\newtheorem{remark}[theorem]{Remark}
\title{Waiting Time Solutions in gas dynamics}
\author{Juhi Jang\thanks{Department of Mathematics, University of Southern California, Los Angeles, CA 90089, USA, and Korea Institute for Advanced Study, Seoul, Korea.  Email: juhijang@usc.edu.}, \ Jiaqi Liu\thanks{Department of Mathematics, University of Southern California, Los Angeles, CA 90089, USA, Email: jiaqil@usc.edu}, \ and \ Nader Masmoudi\thanks{ NYUAD Research Institute, New York University Abu Dhabi, Abu Dhabi, UAE,
and Courant Institute of Mathematical Sciences, New York University, 251 Mercer St, New York,
NY 10012, USA, Email: nm30@nyu.edu}}
\date{}
\begin{document}

\maketitle
\abstract{In this article, we construct a continuum family of self-similar waiting time solutions for the one-dimensional compressible Euler equations for the adiabatic exponent $\ga\in(1,3)$ in the half-line with the vacuum boundary. 
The solutions are confined by a stationary vacuum interface for a finite time with at least $C^1$  regularity of the velocity and the sound speed up to the boundary. Subsequently, the solutions  
undergo the change of the behavior, becoming only H\"{o}lder continuous near the singular point, and simultaneously transition to the solutions to the vacuum moving boundary Euler equations satisfying the physical vacuum condition.  When the boundary starts moving, a weak discontinuity emanating from the singular point along the sonic curve emerges. The solutions are locally smooth in the interior region away from the vacuum boundary and the sonic curve.  
}

\section{Introduction}

We consider one-dimensional polytropic gases governed by compressible Euler equations:
\be\label{Euler}
\begin{split}
\rho_t+ (\rho u)_x &=0, \\
(\rho u)_t+ (\rho u^2 +p)_x &= 0. 
\end{split}
\ee
Here $\rho:(-T,T)\times \Omega \to \mathbb R_{\ge0}$, $u:(-T,T)\times \Omega \to \mathbb R$, $p:(-T,T)\times \Omega \to \mathbb R_{\ge0}$ represent the density, velocity, and pressure of the gas, and $\Omega$ is the spatial domain given by non-vacuum region. The pressure obeys the $\gamma$-law
\be\label{pressure}
p=A \rho^\gamma
\ee
where $A$, $\gamma$ are positive constants so that $A>0$ and $1<\gamma< 3$. Fix $A$ so that $\frac{A\gamma}{\gamma-1}=1$ and introduce the specific enthalpy denoted by $h$
\be\label{enthalpy}
h= \rho^{\gamma-1}. 
\ee
With this normalization, the speed of the sound $c= \sqrt{\frac{dp}{d\rho}} $ is given by 
\[
c= \sqrt{(\gamma-1)h}
\]
and in non-vacuum region, the Euler system \eqref{Euler} can be equivalently written as 
\be\label{Eulerh}
\begin{split}
h_t + u h_x +(\gamma-1) h u_x &=0,\\
u_t + u u_x + h_x&= 0. 
\end{split}
\ee
In the case of $\gamma=2$, $h$ and $\rho$ are the same (cf. \eqref{enthalpy}) and the system $\eqref{Eulerh}_{\gamma=2}$ is also known as the shallow water equations, where $h$ represents the height of water level and $u$ the velocity. 

In this article, we are interested in the boundary behavior of  solutions to \eqref{Euler} when they are continuously in contact with vacuum region. Specifically, we consider the Euler system \eqref{Euler} in the  spatial domain  
$$\Omega (t) := \{ b(t) < x <\infty \}$$ 
representing the support of the fluid. The left boundary $b(t)$ is the first point of contact to vacuum state, initially set to be 0, satisfying the following vacuum boundary conditions 
\be\label{BC1}
\rho (t,x) >0 \ \text{ for } \ b(t)<x<\infty, \quad \rho (t, b(t))=0
\ee
and 
\be\label{BC:kinematic}
 \dot b(t) = u (t, b(t)), \ \ b(-T) = 0 , 
\ee
with initial data 
\be\label{IC}
\rho(-T,x)= \rho_0(x), \ \  u(-T,x)=u_0(x) \text{ for } x>0
\ee 
where 
\[
  \ \rho_0(x)>0  \text{ for } x>0, \text{ and } \  \rho_0(0)=0. 
\]
The boundary point $b(t)$ can be viewed as the interface between fluid region and vacuum region and $x=0$ is the initial interface.

It is well-known that in order for such a vacuum initial boundary value problem to be well-posed in reasonable function spaces even locally-in-time, additional regularity or stability condition on the normal derivative of the specific enthalpy $h$ is required at initial time and to be propagated in time. One important example is the so-called physical vacuum boundary, which can be phrased in the current setting as 
\be\label{PVB}
0<\frac{\partial h}{ \partial x}(t,b(t)) <\infty , 
\ee
which represents a nontrivial finite acceleration near the vacuum boundary. 
This physical vacuum boundary condition arises in various contexts ranging from gas dynamics to astrophysics when studying the dynamics of isolated compactly supported bodies \cite{CoSh2012, Jang2014, JM, LXZ, Sideris17}. There has been a lot of progress over the past decade on the local and global behavior of solutions for the vacuum free boundary problems for compressible fluids, for instance see  \cite{CoSh2012, GHJ2021a, HaJa2018-1, HaJa2016-2, HaJa2017,  IfTa2020, JaMa2009, JaMa2015}. 

A natural question is what happens if the physical vacuum condition does not hold initially. Of course, if the velocity is zero initially at the boundary $u_0(0)=0$,  in view of \eqref{BC:kinematic},  one may be able to propagate the zero velocity along the vacuum boundary and expect a stationary interface at least for some time. However, such a propagation is not always possible. In \cite{JM},  the first and third authors studied well-posedness and ill-posedness questions of various vacuum states depending on different behavior of $\frac{\partial h_0}{\partial x} \sim x^\alpha $.  In particular, it was shown that when $\alpha=1$ (see \cite{LY} for local-in-time well posedness), the boundary behavior $\frac{\partial h}{\partial x}\sim x $ may not persist for long in general, and conjectured that after some time the boundary should start moving and attain the physical vacuum \eqref{PVB}, and for other ranges of $\alpha$ an instantaneous change of behavior into the physical vacuum might occur, see also \cite{LY1}. In fact,  an interesting recent work  \cite{Jenssen} of Jenssen shows that some self-similar solutions to  \eqref{Euler} exhibit an instantaneous change of the vacuum boundary, namely the vacuum boundary moves immediately even if $u_0(0)=0$, while some other self-similar solutions are confined by the stationary interface $x=0$.  

The goal of this article is to show the existence of waiting time solutions with $u_0, \frac{\partial h_0}{\partial x} \sim_{x\ll 1} x$ of which boundary moves after a finite time satisfying the physical vacuum \eqref{PVB}.  We obtain these solutions as a family of self-similar solutions to the free boundary Euler equations  \eqref{Euler}, \eqref{BC1}-\eqref{IC}, see Theorem \ref{main theorem} for the detailed description.  

We mention the work  \cite{GB}, where the authors gave some asymptotic analysis alluding waiting time solutions and instantaneous change near the boundary $x=0$ and claimed the existence of self-similar solutions of the shallow water equation, corresponding to $\gamma=2$. 
And in \cite{Camassa20}, the authors investigated the vacuum dam-break problem with 
 constant far-field initial conditions and constructed various solutions including 
 a non-smooth waiting-time solution. We also remark that the change of behavior or the waiting time behavior of certain vacuum states have been thoroughly studied for nonlinear diffusion equations such as the porous medium equation, for instance see \cite{CF1979, Knerr1977}. 

 
\section{Self-similar ODEs and the main result}

In this section, we derive self-similar ODEs and state the main result. A particular family of self-similar solutions 
will play an important role in our construction. To motivate them, we first discuss simple wave solutions to the Euler equations  \eqref{Euler} and connections to Burgers' equation. 

\subsection{Riemann invariants and simple wave solutions}\label{Sec:2.1}

The Euler equations \eqref{Euler} have the Riemann invariants: 
\[
		z = u - \tfrac{2}{\gamma-1}  c \  \text{and} \ w= u + \tfrac{2}{\gamma-1} c, 
\]
which diagonalize the system: these Riemann invariants $(z,w)$ satisfy the transport equations 
	\begin{align}\label{zw}
		z_t + \la_1 z_x &=0,\\
		w_t+\la_2 w_x &=0,\notag
	\end{align}
	where $\la_1  = u - c$ and $\la_2 = u +c$. The system \eqref{zw} admits the so-called {\it simple wave} solutions for which one of Riemann invariants is constant, for instance see \cite[p164]{Whitham1999}. In particular, if we set $w\equiv 0$
	such that $ u=-\frac{2}{\gamma-1}c$, the $z$-equation leads to  
	the Burgers' equation for $u$
	\be \label{Burgers}
	u_t + \tfrac{\gamma+1}{2} u u _x =0 . 
	\ee	
	If $u$ is a non-positive solution to \eqref{Burgers} for $x\ge 0$ with $u(t,0)=0$, then $u$ and $c=- \frac{\gamma-1}{2}u$ solve the Euler equations \eqref{zw} or  \eqref{Euler} in the half-line. 
	
	On the other hand, it is well-known that classical solutions to Burgers' equation \eqref{Burgers} with an initially negative slope will break down in a finite time and continue as discontinuous shock wave (weak) solutions. After the singularity forms, Burgers' shocks do not serve as simple waves to the Euler equations any longer, since the corresponding simple wave pairs in general do not satisfy the Rankine-Hugoniot conditions for the Euler equations \eqref{Euler}. 
	
	
	In this paper, we show that simple wave solutions emanating from self-similar Burgers solutions, after the first blowup time, {\it continue} as H\"{o}lder continuous solutions to the vacuum free boundary problem whose vacuum boundary satisfies the physical vacuum condition \eqref{PVB}. To this end, we next introduce the self-similarity and derive the self-similar reduction for the Euler system \eqref{Eulerh}.\footnote{We work with \eqref{Eulerh} for $h$ and $u$, as it is convenient for the appearance of physical vacuum.}

\subsection{Scaling invariance and self-similarity as $x\rightarrow 0$ and $t\rightarrow 0$}

The system \eqref{Eulerh} admits a two-parameter family of invariant scalings: the scaling transformation
\be\label{scaling}
u (t,x) \to \nu^{\delta -1 } u ( \frac{t}{\nu},\frac{x}{\nu^\delta }), \quad h (t,x) \to  \nu^{2(\delta -1) } h ( \frac{t}{\nu},\frac{x}{\nu^\delta })
\ee
for $\nu, \, \delta>0$ leaves the system invariant. This scaling symmetry is closely tied to the existence of self-similar solutions. Self-similarity is an important concept in hydrodynamics due to its universal nature and the possibility that self-similar solutions are attractors for different physical phenomena in fluid and gas dynamics \cite{CGN18, EF00, Guderley42, Landau87, Stanyukovich60}. 

Inspired by the scaling symmetry \eqref{scaling}, we introduce the similarity variable: 
\be
y = \frac{x}{|t|^\delta }, \quad \delta >0
\ee
for $t\in \mathbb R$. We then look for solutions of \eqref{Eulerh} which are valid locally as $x\rightarrow 0$ and $t\rightarrow 0$ from above or below where $(t,x)=(0,0)$ corresponds to the first singularity point.\footnote{By translation symmetry of the system, we may consider $\tau = t- T$ to shift the first time of singularity.} 
If $t<0$  (before the singularity), let 
\be\label{2.14}
u (t,x)= - \delta (-t)^{\delta -1} y U(y), \quad h{(t,x)} = \delta^2 (-t)^{2(\delta-1)} y^2 \frac{H(y)}{\gamma-1}, \quad y= \frac{x}{(-t)^\delta}, \quad \delta>0, 
\ee
and for $t>0$ (after the singularity), let 
\be\label{2.15}
u (t,x)= \delta t ^{\delta-1} y U(y), \quad h{(t,x)}  = \delta^2 t^{2(\delta-1)} y^2 \frac{H(y)}{\gamma-1}, \quad y = \frac{x}{t^\delta}, \quad \delta >0. 
\ee

For $t<0$, $x=0$ is a boundary point and $x>0$ is an interior point. Hence $y\ge 0$ in \eqref{2.14}. To observe the change of boundary behavior, we expect $u(t,x)\le 0$ near $x=0$. Therefore, in \eqref{2.14}, we demand $U(y)\geq 0$ and $H(y)\geq 0$ for each $y\ge 0$. On the other hand, when $t>0$,  we expect a moving interface, so $y$ may take either sign. In \eqref{2.15}, we require $U(y)>0 $ for $y<0$, while we have $U(y)<0$  for $y>0$. In both cases $H(y)\ge 0$. 

\begin{remark}  We demand the same values of $\delta$  in \eqref{2.14} and \eqref{2.15}. In fact, a similar argument in Lemma \ref{PassA} (cf. \eqref{weak sol}) shows that in order for the extension in $x>0$ from $t=0^-$ to $0^+$ to yield weak solutions to the Euler equations, the same exponents must be chosen. 
\end{remark}	
	


\subsection{Self-similar ODE}

With the self-similar ansatz \eqref{2.14} and \eqref{2.15}, the equations \eqref{Eulerh} are reduced to the following the system of ODEs in both $t<0$ and $t>0$ 
\begin{align}\label{ode system}
\begin{cases}y\cfrac{dH}{dy} &= \cfrac{2H[H-(U^2-k_1U+\mu)]}{(U-1)^2-H} =:\cfrac{F(H,U,\ga,\mu)}{\Delta(H,U)},\\
y\cfrac{dU}{dy} &= \cfrac{H(U+k_2)-U(U-1)(U-\mu)}{(U-1)^2-H}=:\cfrac{G(H,U,\ga,\mu)}{\Delta(H,U)}\end{cases}
\end{align}
where 
\begin{align}
	k_1 &= \frac{(\ga+1)+\mu(3-\ga)}{2}, \quad	k_2  = \frac{2(1-\mu)}{\ga-1}, \quad \mu = \frac{1}{\delta} \label{k2}.
\end{align}
For notational convenience, we will use both $\delta$ and $\mu=\frac{1}{\delta}$ throughout the paper. 

We will also consider the effective ODE for $H=H(U)$: 
\begin{align}\label{ODE}
	\frac{dH}{dU} = \frac{F(H,U,\ga,\mu)}{G(H,U,\ga,\mu)}. 
\end{align} 

The ODE system \eqref{ode system} has possible singularities either when $y=0$ or when $\Delta =0 $. The latter is related to the so-called sonic points. We define 

\

{\bf Sonic curves:} $H= (U-1)^2$, which is the set where $\Delta(H,U)=0$. 

\

The sonic points have played an important role in the recent constructions of smooth profiles for the implosion, gravitational collapse and of collapsing-reflected shocks \cite{GHJ21, GHJ23, GHJS22, JLS23, JLS24, Jenssen, Jenssen18, Jenssen23, Merle22a}. If the trajectory is inevitably passing through the sonic point, it has to pass smoothly, in other words, $F=G=\Delta=0$ at the sonic point. Many of recent constructions of self-similar singular solutions do enjoy smoothness, and in some cases, regularity acts as a selection criterion for the solution. If it can't pass smoothly, the flow might undergo the jump beforehand and continue as a discontinuity (shock); in this case, the sonic point can be thought of as the maximal development of the flow, and the corresponding solutions may be weak solutions such as physical shock discontinuities, for instance as in the classical reflected shocks of {Guderley} \cite{Guderley42, JLS24}. In our problem, the sonic points will play a pivotal role in the continuation part after the first singularity occurs. It turns out that the trajectory can pass the first  sonic point Lipschitz-continuously, therefore exhibiting weak discontinuities, and it will stop at the second sonic point representing the vacuum boundary.

We note that the effective ODE \eqref{ODE} admits the special solution ${H}^{\text{sp}}(U) = \frac{(\ga-1)^2}{4}U^2$.

\begin{lemma}\label{L: special solution}
	For any $\ga\in(1,3)$ and $\mu\in(0,1)$, $H^{\text{sp}}(U) = \frac{(\ga-1)^2}{4}U^2$ is always a solution to the ODE \eqref{ODE}.
\end{lemma}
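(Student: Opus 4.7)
The plan is to verify the claim by direct substitution: since the ODE $\frac{dH}{dU}=F/G$ is an algebraic identity in $(H,U,\gamma,\mu)$, it suffices to plug the ansatz $H=\tfrac{(\gamma-1)^2}{4}U^2$ into $F$ and $G$ and check that the resulting polynomial identity
\[
F\!\left(\tfrac{(\gamma-1)^2}{4}U^2,U,\gamma,\mu\right)=\tfrac{(\gamma-1)^2}{2}U\cdot G\!\left(\tfrac{(\gamma-1)^2}{4}U^2,U,\gamma,\mu\right)
\]
holds identically in $U$, for every admissible $\gamma$ and $\mu$. This reduces the lemma to a one-line algebraic verification.

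To carry out the computation I would set $a^2:=\tfrac{(\gamma-1)^2}{4}$ to avoid clutter. Substituting $H=a^2U^2$ into $F=2H[H-(U^2-k_1U+\mu)]$ gives $F=2a^2U^2\bigl[(a^2-1)U^2+k_1U-\mu\bigr]$. Substituting into $G=H(U+k_2)-U(U-1)(U-\mu)$ and expanding yields $G=(a^2-1)U^3+(a^2k_2+1+\mu)U^2-\mu U$. Dividing $F$ by $2a^2U$ and comparing to $G$ term by term, the $U^3$ and $U^1$ coefficients match automatically, and the $U^2$ coefficient matches iff
\[
k_1=a^2k_2+1+\mu.
\]
Using the definitions $k_1=\tfrac{(\gamma+1)+\mu(3-\gamma)}{2}$ and $k_2=\tfrac{2(1-\mu)}{\gamma-1}$, one computes $a^2k_2=\tfrac{(\gamma-1)(1-\mu)}{2}$, and then $a^2k_2+1+\mu=\tfrac{(\gamma-1)+2+\mu(2-(\gamma-1))}{2}=\tfrac{(\gamma+1)+\mu(3-\gamma)}{2}=k_1$, closing the identity.

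There is essentially no obstacle here; the only thing that needs care is tracking the $\mu$-dependence in $k_1$ and $k_2$ correctly, so the identity $k_1=a^2k_2+1+\mu$ holds for \emph{every} $\mu\in(0,1)$ rather than only for a special exponent. As a conceptual consistency check one can observe that $H=\tfrac{(\gamma-1)^2}{4}U^2$ is precisely the self-similar image of the simple-wave relation $w\equiv 0$ discussed in Section~\ref{Sec:2.1}, which forces $h=\tfrac{(\gamma-1)}{4}u^2$; since the simple wave $w\equiv 0$ solves the Euler system~\eqref{Eulerh} (with $u$ satisfying Burgers) irrespective of the scaling exponent $\delta$, its self-similar profile must satisfy \eqref{ODE} for every $\mu=1/\delta$, which is consistent with the algebraic verification above.
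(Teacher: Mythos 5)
Your proof is correct and takes the same approach as the paper, which simply states that plugging $H=\frac{(\gamma-1)^2}{4}U^2$ into $F/G$ makes the identity "clear"; you have merely written out the substitution explicitly and isolated the key identity $k_1=\frac{(\gamma-1)^2}{4}k_2+1+\mu$, which does hold for all $\gamma$ and $\mu$. Phrasing the verification as the polynomial identity $F=\frac{(\gamma-1)^2}{2}U\cdot G$ rather than as a quotient is also the cleaner formulation, since it sidesteps the isolated zeros of $G$.
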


\begin{proof}
	By plugging $H(U) = \frac{(\ga-1)^2}{4}U^2$ into $ \frac{F(H,U,\ga,\mu)}{G(H,U,\ga,\mu)}$, it is clear that $H'(U) = \frac{(\ga-1)^2}{2}U = \frac{dH}{dU}|_{H=H(U)}$.
\end{proof}

The special solution $H^{\text{sp}}(U) = \frac{(\ga-1)^2}{4}U^2$ is indeed a manifest of the simple wave solution satisfying $u=-\frac{2}{\gamma-1}c$ discussed in Section \ref{Sec:2.1}. The ODE system \eqref{ode system} associated with this special solution leads to the single ODE (see \eqref{ode B}), which is the self-similar ODE for the Burgers' equation.\footnote{The equation for $- yU(y)$ in \eqref{ode B} will lead to the form frequently used in other works \cite{CGN18, EF00}.} We will take this special solution as our solution until it meets the sonic curve $H= (U-1)^2$.

We are now ready to state the main result of the paper. 

\begin{theorem}\label{main theorem} Let $\gamma\in (1,3)$ be given. For each $\mu\in (0,1)$, there exist a continuum family of $\rho$ and $u$ solving the vacuum free boundary Euler  \eqref{Euler}, \eqref{BC1}-\eqref{IC} with the left vacuum boundary $b(t)$:  
\be\label{b(t)}
b(t) = 
\begin{cases}
0, & t<0  \\
y_B t^{\frac{1}{\mu}} & t\ge 0 
\end{cases}
\ee
for some $y_B <0$. 
Moreover, $(\rho,u)$ satisfy the following: 
\begin{enumerate}
\item 
They are self-similar, namely  of the form \eqref{2.14} for $t<0$ and \eqref{2.15} for $t>0$. 
\item There exist $-\infty<y_B<y_D<0$ such that they are smooth in the regions $\{(t,x): x>0, \  t\in \mathbb R\} $,  $ \{(t,x):   y_D t^{\delta} < x , \  t>0\}$,  $ \{(t,x): y_B t^{\delta} < x < y_D t^\delta, \  t>0    \}$ (see Figure \ref{fig:tr}), but only Lipschitz continuous across the sonic curve $x= y_D t^\delta$, $t>0$. 
\item They are H\"{o}lder continuous up to the singular point $\mathcal O=(0,0)$. 
\item They satisfy the physical vacuum condition \eqref{PVB} for each $t>0$. 
\end{enumerate}
\end{theorem}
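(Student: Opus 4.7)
The construction is carried out entirely in the $(U,H)$-phase plane for the ODE system \eqref{ode system}, producing a piecewise self-similar profile $y \mapsto (U(y), H(y))$ built from two arcs glued at a sonic point, and then transcribed into $(\rho,u)$ via the ansatz \eqref{2.14}--\eqref{2.15}. On the outer arc, covering the Burgers simple-wave portion $\{t<0,\ x>0\}\cup\{t>0,\ x>y_D t^\delta\}$, I place the trajectory on the special curve $H = H^{\mathrm{sp}}(U) = \tfrac{(\ga-1)^2}{4}U^2$ of Lemma \ref{L: special solution}, which is invariant for \eqref{ODE}. On this curve the system \eqref{ode system} collapses to a single scalar self-similar Burgers ODE for $U(y)$, whose solutions form a one-parameter family indexed by $U(0)$; imposing $u(t,0)=0$ and the linear initial behavior $u_0,\partial_x h_0 \sim x$ reconstructs the stationary-boundary portion of the solution. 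A direct calculation shows that $H^{\mathrm{sp}}$ meets the sonic locus $H=(U-1)^2$ at exactly the points $U=2/(\ga+1)$ and $U=2/(3-\ga)$; the sign conventions for \eqref{2.14}--\eqref{2.15} single out one as the sonic point $P_D = (U_D,H_D)$, and the corresponding self-similar coordinate $y_D < 0$ is obtained by quadrature.

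The sonic passage at $P_D$ is the heart of the construction. One has $F = G = \Delta = 0$ at $P_D$, so continuation as a $C^1$ integral curve of \eqref{ODE} would require a resonance condition on $(\ga,\mu)$ which fails generically. Instead, I linearize \eqref{ODE} at $P_D$: the resulting $2\times 2$ Jacobian has two distinct eigendirections, one tangent to $H^{\mathrm{sp}}$ and one transverse. A standard stable-manifold-type analysis for simple-node singular points of first-order ODEs yields a continuous integral curve of \eqref{ODE} emerging from $P_D$ along the transverse eigendirection; reparametrization by $y$ through \eqref{ode system} introduces a power-law factor from the vanishing $\Delta$, so that $(U,H)$ are only Lipschitz (and not $C^1$) in $y$ at $P_D$. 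Transcribed through \eqref{2.15}, this produces exactly the Lipschitz continuity of $(u,h)$ across the sonic curve $x = y_D t^\delta$ asserted in item (2) and the weak discontinuity advertised in the abstract.

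I then continue this transverse arc by flowing \eqref{ode system} in decreasing $y$. A trapping region bounded by the sonic curve $H=(U-1)^2$, the axis $H=0$, and suitable nullclines of $F$ and $G$ together with the sign structure of the right-hand sides forces the trajectory to reach the endpoint $P_B = (1,0)$ on the sonic locus in finite $\log |y|$ time, at a value $y_B < y_D < 0$. The quadratic tangency of the sonic curve to $\{H=0\}$ at $P_B$ dictates the asymptotics $H(y) \sim |y - y_B|$ and $U(y)-1 \sim |y-y_B|^{1/2}$ as $y \downarrow y_B$; through \eqref{2.15} this is precisely the physical vacuum condition \eqref{PVB}, with the boundary law $b(t) = y_B t^{1/\mu}$ built into the ansatz.

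Finally, smoothness of $(U,H)$ away from the two sonic points $P_D$ and $P_B$ transfers to smoothness of $(\rho,u)$ in each of the three open regions of item (2). H\"older continuity at $\mathcal O = (0,0)$ follows from the prefactors $|t|^{\delta-1} y$ and $|t|^{2(\delta-1)}y^2$ in \eqref{2.14}--\eqref{2.15} combined with $\delta = 1/\mu > 1$, since on the trajectory $|t|^{\delta-1}y \lesssim x^{(\delta-1)/\delta}$. The gluing across $t=0$ on $\{x>0\}$ produces a genuine distributional solution of \eqref{Euler} because the two ansatzes share the same exponent $\delta$, consistent with the remark after \eqref{2.15} and formally verified via Lemma \ref{PassA}. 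The principal analytic obstacle is the sonic-passage step: establishing the Lipschitz, non-smooth branch at the degenerate point $P_D$, and then tracking the continuation uniformly all the way to the vacuum endpoint $P_B$ for every $(\ga,\mu) \in (1,3)\times(0,1)$, is where the core analytic content of the theorem resides.
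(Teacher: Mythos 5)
Your proposal reproduces the paper's global architecture (Burgers special solution up to the sonic point $D$, branch switch at $D$ producing Lipschitz-only regularity, connection to the vacuum point $B=(1,0)$, physical vacuum from the linear vanishing of $H$), but the key analytic step is set up in a way that would fail. The $D$-to-$B$ connection is run in the wrong direction: $D$ is a stable node for the rescaled flow ($\la_2<\la_1<0$ in Lemma \ref{Le: integral curves at D}), so a one-parameter family of integral curves enters $D$ tangent to the transverse eigendirection $\vec v_1$, and ``the transverse arc'' you propose to flow away from $D$ is not a single curve. A trapping region bounded by $H=(U-1)^2$, $H=0$ and the nullclines cannot force an unselected member of this family to terminate at $B$: by Lemma \ref{der at B}, $B$ is a saddle, so exactly one integral curve reaches it from the right, and all nearby trajectories pass $B$ and exit along its unstable manifold. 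The paper shoots in the opposite direction: it takes the unique branch leaving the saddle $B$ with slope $C_2(U_B,H_B)$ and uses the nullcline $H_G(U)$ as a lower barrier and $H^{\text{sp}}$ as an upper barrier to prove this single curve reaches $D$ (Lemma \ref{B to D}); the free coefficient $U_\beta$ in the node expansion at $D$ is then determined a posteriori.

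Three further points. Your local asymptotics at $B$ are wrong: the connecting curve approaches $B$ along the slope-$C_2$ branch, so $U-1$ and $H$ both vanish linearly in $y-y_B$ (Lemma \ref{Asy at B}); the square-root behavior $U-1\sim|y-y_B|^{1/2}$ you infer from ``quadratic tangency of the sonic curve'' would force $dH/dU\to 0$ at $B$, i.e.\ the trivial branch $H\equiv 0$. (The physical vacuum conclusion survives because $H\sim(y-y_B)$ is correct, but the mechanism you cite is not.) Your reason for the branch switch at $D$ --- a resonance condition for $C^1$ continuation failing generically --- is also not the right one: the special solution $H^{\text{sp}}$ continues $C^\infty$ through $D$ for every $(\ga,\mu)$; the paper discards that continuation because it returns toward $A$ and never produces a vacuum boundary, and only then concludes the trajectory must take the $C_2$ branch and lose $C^1$ regularity. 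Finally, you pass over the critical point $E$ ($x=0$, $t>0$, where $U,H\to\infty$): the continuation there requires matching $\lim_{y\to 0^\pm}yU(y)=-K$ and ruling out stationary $2$-shocks at $x=0$, and this is where the H\"{o}lder continuity at $\mathcal O$ and the uniqueness of the weak extension are actually established (Lemmas \ref{lemma 3.6}, \ref{lemma 3.7}, \ref{PassA} and Remark \ref{E smooth extension}).
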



\begin{figure}
\captionsetup{width=0.8\textwidth}
\begin{center}
\begin{tikzpicture}[scale=1.2,domain=-4:4]
  \draw[->,line width=0.25mm] (0,0) -- (2,0) node[right,scale=0.72] {$x$};
  \draw[->,line width=0.25mm] (0,0) -- (0,2) node[above,scale=0.72] {$t$};
    \draw[line width=0.25mm] (0,0) -- (0,-1.8);
        \draw[line width=0.25mm] (0,0) -- (-2,0);
 \node[right,color=black,scale=0.7](Q) at (0,-1.6) {$y_C=0$};
    \node[above,color=black,scale=0.7](Q) at (1.5,0) {$ y_A=+\infty$};
    \node[below,color=black,scale=0.7](Q) at (1.5,0) {$y_A=+\infty$};
    \node[right,color=black,scale=0.7](Q) at (0,0) {$\mathcal O$};
    \draw[smooth,color=blue,domain=-1.5:0] plot (\x,{sqrt(-\x)});
    \node[above,color=black,scale=0.7](Q) at (-1.8,1.1) {$y=y_D$};
     \draw[smooth,color=red,domain=-1.5:0] plot (\x,{0.5*sqrt(-\x)});
    \node[above,color=black,scale=0.7](Q) at (-1.8,0.5) {$y=y_B$};
    \node[above,color=black,scale=0.7](Q) at (-1.2, 0.05) {Vacuum Region};
    \node[right,color=black,scale=0.7](Q) at (0,1.8) {$y_E = 0^+$};
    \node[left,color=black,scale=0.7](Q) at (0,1.8) {$y_E = 0^-$};
        \node[above,color=black,scale=0.7](Q) at (-1.2, -0.8) {Vacuum Region};
\end{tikzpicture}
 \end{center}
 \caption{The structure of the waiting time solution. The red line shows the moving boundary $\{ y \equiv y_B \}=\{x=y_B t^\delta\}$. 
 The blue line corresponds to the sonic curve $\{ y \equiv y_D \}=\{x=y_D t^\delta\}$ representing the weak discontinuity.
   }\label{fig:tr}
    \vspace{-5mm}
\end{figure}
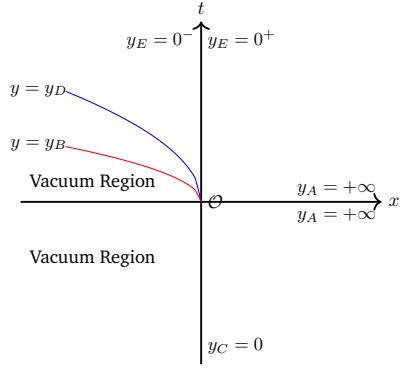

\smallskip 

To the best of our knowledge, Theorem \ref{main theorem} provides the first rigorous construction of waiting time solutions in gas dynamics exhibiting a stationary vacuum interface changes into a moving interface with the physical vacuum after a finite time. As described above, these solutions are locally smooth away from the boundary and away from the sonic curve, and the solutions are H\"{o}lder continuous up to the singular point. The sonic curve emanating from the singular point $\mathcal O=(0,0)$ represents a weak discontinuity: the derivatives of the velocity and enthalpy undergo a jump across the sonic curve.

The proof of Theorem \ref{main theorem} is based on a careful study of the self-similar ODE system \eqref{ode system} and the extension of the trajectory  through critical points. We take the trajectory induced by the special solution $H^{\text{sp}}(U) = \frac{(\ga-1)^2}{4}U^2$ for $x>0$ and $t<0$, and extend it smoothly via the special solution $H^{\text{sp}}(U) = \frac{(\ga-1)^2}{4}U^2$ for $x>y_D t^\delta$ and $t>0$, namely until it meets the first sonic point (Section \ref{Sec 3}). In order to extend the trajectory  reaching the vacuum boundary, it cannot pass through the sonic point by following the special solution. A detailed analysis around the sonic point using the Poincar\'{e}-Dulac Theorem reveals that there exists another direction allowed for the solution to pass through the sonic point and to arrive at the vacuum boundary, the second sonic point. To make the connection, we construct the unique trajectory starting from the second sonic point, the vacuum boundary represented by the curve $\{x=y_B t^\delta\}$, to the first sonic point represented by $\{x=y_D t^\delta\}$ by a barrier argument (Section \ref{Sec 4}). From the construction, it is evident that the solutions are not smooth across the sonic curve. In Section \ref{Sec 5}, the proof of Theorem \ref{main theorem} is completed. 

We conclude this section with a couple of remarks on our waiting time solutions in Theorem \ref{main theorem}. 
First of all, for each $\gamma\in (1,3)$, we obtain a two-parameter continuum family of the solutions; one parameter is the self-similar exponent $\mu\in (0,1)$ ($\delta>1$) and the other freedom is coming from the arbitrary coefficient of the next order expansion of the initial profiles (cf. Remark \ref{remark 2.3} and \eqref{(3.22)}). In fact, for given  $\gamma\in (1,3)$ and $\mu\in (0,1)$, the solution is unique modulo such a freedom determined by the coefficient, in the class of self-similar solutions considered in this article. We only consider $\mu\in (0,1)$ as other values of $\mu$ are not relevant to our study (cf. Remark \ref{remark3.2}). 
 We note that $\mu\in (0,1)$ directly enters into the regularity of the solutions near the boundary and the boundary geometry (cf. Remark \ref{remark 2.3}), and in particular,  if $\frac{1}{1-\mu}\in \mathbb N$, the initial regularity of the profile is smooth up to the boundary (cf. Remark \ref{remark_initial}). 

Self-similar waiting time solutions constructed in Theorem \ref{main theorem} are relevant in the neighborhood of $\mathcal O=(0,0)$ undergoing the change of behavior and in turn allowing the continuation of the solutions; by the finite speed of propagation enjoyed by Euler flows and the cutoff argument in the far field, one could produce finite energy solutions (for instance, see \cite{GHJ23, Merle22b}). To illustrate the change of behavior of solutions near $\mathcal O=(0,0)$ in the original variables $t$ and $x$, we summarize the local behavior near $x=0$ from $t=-T$ to $t=T$ in the following.  

\begin{remark}\label{remark 2.3}
 The velocity of the solutions at initial time $t=-T$ near the vacuum boundary 
takes the from $$u(-T,x) = - \tfrac{2}{(\gamma+1)T} x + c_1 x^{\frac{1}{1-\mu}}+ o(x^{\frac{1}{1-\mu}}), \quad 0<x\ll 1 $$ where $c_1>0$ is an arbitrary constant (cf. Remark \ref{remark_initial}); when $t\to 0^\pm$ and $x>0$, it takes the form $$u(t,x) = - c_2 x^{1-\mu} + o(|t|), \quad x>0, \ |t|\ll 1 $$
 for some $c_2>0$ (cf. Remark \ref{remark3.8}); when $t>0$ and $|x| \ll t^\delta$, it behaves like $$u(t,x)=-c_3 t^{\delta -1} + o(t^{\delta -1}), \quad t>0,  \ |x| \ll t^\delta$$ for some $c_3>0$ (cf. \eqref{u at E}); 
 when $t>0$ and $ ( y_D-\epsilon) t^\delta <x < (y_D + \epsilon) t^\delta$ for some sufficiently small $\epsilon>0$, 
 \[
u(t,x) = 
\begin{cases}
 \delta \frac{x}{t} (\frac{2}{3-\gamma} + c_4 ( \frac{x}{t^\delta} - y_D) + o( \frac{x}{t^\delta} - y_D ) ), &y_D t^\delta <x < (y_D + \epsilon) t^\delta \\
 \delta \frac{2}{3-\gamma} \frac{x}{t},  & x=y_D t^\delta  \\
  \delta \frac{x}{t} (\frac{2}{3-\gamma} + c_5 ( \frac{x}{t^\delta} - y_D) + o( \frac{x}{t^\delta} - y_D ) ), & (y_D-\epsilon) t^\delta <x < y_D t^\delta 
\end{cases}
 \]
 for $c_4, c_5>0$, $c_4\neq c_5$ (cf. Lemma \ref{Le: asy at D both sides}); when $t>0$ and $ y_B t^\delta \le x < (y_B+ \epsilon) t^\delta$, $$u(t,x )= \delta \frac{x}{t} (1 + c_6 ( \frac{x}{t^\delta} - y_B) + o( \frac{x}{t^\delta} - y_B ) ), \quad h(t,x )=\frac{ \delta^2}{\gamma-1} \frac{x^2}{t^2} (c_7 ( \frac{x}{t^\delta} - y_B) + o( \frac{x}{t^\delta} - y_B ) ) $$ 
 for $c_6, c_7>0$ (cf. Lemma \ref{Asy at B}). We note that constants $c_j >0$, $2\le j\le 7$ depend on $c_1>0$ as well as $\gamma$ and $\mu$. 
\end{remark}


\section{Connection from $C$ to $A$, $E$ and $D$}\label{Sec 3}


\subsection{Basic phase portrait analysis}

There are five relevant critical points {(see Figure \ref{fig2})}. 

\

\underline{\textbf{Zeros of $F=G=\Delta=0$:}} 
\begin{align}
&B=(U_B,H_B)=(1,0), \\ 
&D=(U_D,H_D) = (\frac{2}{3-\ga},(\frac{\ga-1}{3-\ga})^2).
\end{align}
The triple point $B$ is the saddle for $y=y_B<0$, and the triple point $D$ is the node for $y=y_D<0$.

\

\underline{\textbf{Zeros of $F=G=0$ and $\Delta\neq 0$:}} 

\begin{align}
&A=(U_A,H_A)=(0,0), \\ 
&C=(U_C,H_C) =( \frac{2}{\ga+1}\mu,(\frac{\ga-1}{\ga+1})^2\mu^2), \\ 
&E=(U_E,H_E) = (\infty,\infty).
\end{align}
The point $A$ is the node relevant for $|y| \rightarrow \infty$, $C$ the saddle for $y \rightarrow 0$, $E$ the node-saddle for $ |y| \rightarrow 0$. There exists another double point $ (\mu,0)$, but this point is irrelevant to our analysis. 

\

The goal is to construct a smooth trajectory 
to the ODE system \eqref{ode system} which starts at the point $C$ ($x=0$ for $t<0$),  and continues to the points $A$ ($x>0$ for $t=0$), $E$ ($x=0$ for $t>0$), $D$ (the first sonic point: weak discontinuity for $t>0$) and ends at the triple point $B$ (the second sonic point: vacuum boundary for $t>0$). 

\begin{figure}
	\centering
	\includegraphics[scale=0.7]{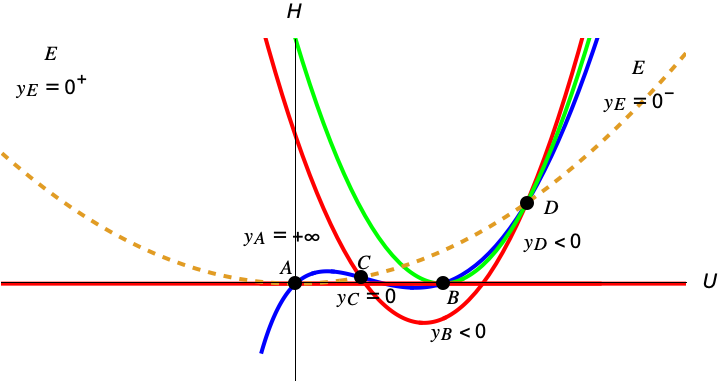}
	\caption{$\gamma = 1.816$, $\mu=0.716$, Dashed line: $H^\text{sp} = \frac{(\ga-1)^2}{4}U^2$, Red line: $F=0$, Blue line: $G=0$, Green line: $\Delta=0$.}
	\label{fig2}
\end{figure}

\begin{remark}\label{remark3.1}
It is clear that points $A$, $C$ and $D$ lie on the curve $H = \frac{(\ga-1)^2}{4}U^2$. 
\end{remark}

For all the critical points, we have $F = G =0$. Hence, by using the L'Hospital Rule, at each critical point, we obtain
\begin{align}
	H'(U) = \frac{F_H - G_U \pm \sqrt{(G_U-F_H)^2+4F_UG_H}}{2G_H}
\end{align}
where
\begin{align*}
	&F_H(U,H) = 4H - 2(U^2-k_1U+\mu),\quad F_U(U,H)  = -2H(2U-k_1),\\
	&G_H(U,H)  = U+k_2,\quad G_U(U,H) = H-3U^2+2(1+\mu)U-\mu.
\end{align*}
 \begin{remark}\label{remark3.2} In this paper, we are interested only in the case $\delta>1$, which is equivalent to $\mu\in(0,1)$. When $\delta=1$ (i.e., $\mu$=1), the self-similar variables $(U,H)$ either solve 
\begin{align}\label{delta1case1}
y\cfrac{dH}{dy} = -2H\quad \text{and}\quad y\cfrac{dU}{dy} =-U, 
\end{align}
or satisfy 
\be\label{delta1case2}
H = (U-1)^2. 
\ee
In the case of \eqref{delta1case1}, 
by directly solving the ODEs \eqref{delta1case1}, we obtain $H(y) = K_1 y^{-2}$ and  $U(y) = K_2 y^{-1}$
for some constants $K_1, K_2$. Returning to the physical variables, for $t<0$ and $x>0$, we have 
$
	h(t,x) \equiv K_1$ and  
	$u(t,x) \equiv -K_2,
$
which correspond to the standard Riemann problem, a scenario outside 
of our interest. 
In the case of \eqref{delta1case2}, 
we plug 
\eqref{2.14} and \eqref{delta1case2} back to the second equation of \eqref{Eulerh} to obtain
\be
	y(U-1)U'(y) = -(U-U_C)(U-1).
\ee
If $U-1=0$, then $H\equiv 0$ and this is not relevant to our study. For $U-1\neq 0$, the ODE reduces to
\be
	yU'(y) = -(U-U_C).
\ee
From the ODE, $U=U_C$ is a stationary solution, and all other solutions converge to $U_C$ as $|y|\to \infty$. Now, to satisfy $\rho_0(0)=0$, the only admissible solution is $U\equiv U_C$ and $H\equiv H_C$. Back to \eqref{2.14}, we see that the physical variables satisfy 
\be\label{delta12}
	u(t,x) = -U_C \frac{x}{t}\quad \text{and}\quad h(t,x) = \frac{H_C}{\ga-1} \frac{x^2}{t^2}.
\ee
This solution exhibits blow-up behavior for $t\to 0^-$ and $x>0$, but it 
is not locally integrable when $t\to 0^-$. 
We note that this profile in  \eqref{delta12}  aligns with the behavior described in \cite{Camassa20} near $x=0$ for $t<0$.

On the other hand, 
if $0<\delta<1$ (i.e. $\mu>1$), while the special solution $H^\text{sp}(U)$ still exists, the corresponding ODE solution (cf.  \eqref{formula special}) does not allow the trajectory to begin at the point $C$ ($y=0$). 
\end{remark}

\subsection{Connection from $C$ to $A$}

We first show that there are two possible slopes at $C$. 

\begin{lemma}\label{der at C}
	For any $\ga\in(1,3)$ and $\mu\in(0,1)$, there are two distinct branches at the point $C$. 
\end{lemma}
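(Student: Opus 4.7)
The plan is to invoke the quadratic formula for slopes at a critical point with $F=G=0$ and $\Delta\neq 0$ displayed just before the lemma: the possible slopes $m=dH/dU$ at $C$ satisfy
\[
G_H(C)\, m^2 + (G_U(C) - F_H(C))\, m - F_U(C) = 0,
\]
so two distinct real slopes exist precisely when the discriminant $D := (F_H-G_U)^2 + 4 G_H F_U$, evaluated at $C$, is strictly positive (and $G_H(C)\neq 0$). The proof therefore reduces to evaluating the four partials $F_H, F_U, G_H, G_U$ at $C$ and checking the sign of $D$.

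Evaluating step by step, the identity $H_C = U_C^2 - k_1 U_C + \mu$ (which is $F(C)=0$ rewritten, using $H_C\neq 0$) immediately gives $F_H(C) = 2 H_C > 0$. For $G_H(C) = U_C + k_2$, substituting $U_C = \tfrac{2\mu}{\gamma+1}$ and $k_2 = \tfrac{2(1-\mu)}{\gamma-1}$ from \eqref{k2} and reducing to a common denominator yields
\[
G_H(C) = \frac{2(\gamma+1-2\mu)}{(\gamma-1)(\gamma+1)},
\]
which is strictly positive throughout the parameter range since $\gamma+1-2\mu > \gamma-1 > 0$. For $F_U(C) = -2H_C(2U_C - k_1)$, the key algebraic point is the identity $8 + (\gamma-3)(\gamma+1) = (\gamma-1)^2 + 4$, which allows one to rewrite
\[
2U_C - k_1 = \frac{\mu\,[(\gamma-1)^2+4] - (\gamma+1)^2}{2(\gamma+1)};
\]
since $\mu<1$, the numerator is bounded above by $(\gamma-1)^2 + 4 - (\gamma+1)^2 = 4(1-\gamma) < 0$, so $2U_C - k_1 < 0$ and hence $F_U(C) > 0$.

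Combining these signs, $4 G_H(C) F_U(C) > 0$, so $D>0$ strictly and the quadratic has two distinct real roots, yielding two distinct branches at $C$. As a consistency check, one of these two slopes must coincide with the slope at $C$ of the special curve $H^{\text{sp}}(U) = \tfrac{(\gamma-1)^2}{4}U^2$, namely $m_1 = \tfrac{(\gamma-1)^2\mu}{\gamma+1}$: by Lemma \ref{L: special solution} this curve is a trajectory of $dH/dU=F/G$, and by Remark \ref{remark3.1} it passes through $C$. The second root then corresponds to the transversal branch that leaves $C$ off the special curve. The only mildly delicate step is the sign check for $2U_C - k_1$, where one must combine the fractions carefully and invoke the identity above; all other sign checks follow immediately from $\gamma\in(1,3)$ and $\mu\in(0,1)$.
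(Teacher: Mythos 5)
Your proof is correct: the sign checks for $F_H(C)=2H_C>0$, $G_H(C)=\frac{2(\gamma+1-2\mu)}{(\gamma-1)(\gamma+1)}>0$, and $2U_C-k_1=\frac{\mu[(\gamma-1)^2+4]-(\gamma+1)^2}{2(\gamma+1)}<0$ (hence $F_U(C)>0$) all verify, so the discriminant $(F_H-G_U)^2+4G_HF_U$ is strictly positive at $C$ and the slope quadratic has two distinct real roots. The paper takes the same underlying route (the quadratic for the slopes displayed before the lemma) but carries the computation further, exhibiting the two roots explicitly as
\[
C_1=\frac{(\gamma-1)^2}{\gamma+1}\mu>0,\qquad C_2=\frac{(\gamma-1)\bigl[(\gamma^2-2\gamma+5)\mu-(\gamma+1)^2\bigr]\mu}{2(\gamma+1)(\gamma+1-2\mu)}<0 ,
\]
and the signs of these roots are actually used downstream: $C_1>0$ is matched to the slope of the special solution $H^{\text{sp}}$ to identify the outgoing trajectory toward $A$, and $C_2<0$ is what rules out the other branch. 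Your discriminant argument proves the lemma as stated with less computation, and in fact your sign information already recovers the essential downstream fact for free: since the product of the two roots equals $-F_U(C)/G_H(C)<0$, the two slopes have opposite signs, which is exactly the dichotomy the paper exploits. It would be worth adding that one-line observation so that the later branch selection does not silently depend on the explicit formulas you chose not to compute.
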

\begin{proof}
	Direct computation shows that there are two distinct branches: 
	\begin{align}
		C_1(U_C,H_C) &= \frac{(\ga-1)^2}{\ga+1}\mu>0, \\
		 C_2(U_C,H_C)& = \frac{(\ga-1)\Big[(\ga^2-2\ga+5)\mu-(\ga+1)^2\Big]\mu}{2(\ga+1)(\ga+1-2\mu)}<-\frac{4(\ga-1)^2\mu}{2(\ga+1)(\ga+1-2\mu)}<0.
	\end{align}
\end{proof}




To connect $C$ to $A$, we can only choose the branch associated with the positive slope $C_1$. On the other hand, the slope of the special solution $ H^{\text{sp}}(U)= \frac{(\gamma-1)^2}{4}U^2$ at the point $C$ is the same as $C_1$: 
\[
{H^{\text{sp}}}' (U_C)= \frac{(\gamma-1)^2}{2} U_C =  \frac{(\gamma-1)^2}{\gamma+1}\mu = C_1(U_C,H_C). 
\]
Since $H^{\text{sp}}(U_C) = H_C$ as in Remark \ref{remark3.1}, by the uniqueness of smooth solutions of \eqref{ODE}, the solution starting from $C$ with the slope $C_1$ must be $H^{\text{sp}}$, and hence $C$ must be connected to $A$ via the special solution $H^{\text{sp}}(U)= \frac{(\gamma-1)^2}{4}U^2$.

Before we move on, we examine the dynamics and properties of the special solution $H^{\text{sp}}(U) = \frac{(\ga-1)^2}{4}U^2$ satisfying our ODE system \eqref{ode system} in $y$ variable. Plugging  $H(U) = \frac{(\ga-1)^2}{4}U^2$ into the second equation of \eqref{ode system}, we obtain the equation for $U=U(y)$:
\be\label{ode B}
y \frac{dU}{dy} = -\frac{[(\ga+1)U-2\mu]U}{(\ga+1) U-2}. 
\ee
Since our trajectory starts at $C$, we must have $U(0)=U_C= \frac{2\mu}{\gamma+1}$. And since $y=0$ is a singular point, the solution to \eqref{ode B} cannot be determined uniquely by using $U(0)=\frac{2\mu}{\gamma+1}$ only. Local asymptotic analysis around $y=0$ reveals that \eqref{ode B} admits series solutions around $y=0$ in terms of powers of $y^{\frac{\mu}{1-\mu}}$, for which the coefficient of $y^{\frac{\mu}{1-\mu}}$  acts as a free parameter in the series expansion. In fact, \eqref{ode B} can be integrated in terms of $U$ to yield the explicit form of $y$ in terms of $U$. 

\begin{lemma}[Special solution before the change of behavior]\label{lemma 3.4}  Let $\mu\in(0,1)$ be given. The solutions of \eqref{ode B} with $U(0)= \frac{2\mu}{1+\gamma}$ satisfy the following 
\be\label{B formula}
y= K \frac{ ( \frac{2\mu}{\gamma+1} -U^{\text{sp}})^{\frac{1}{\mu}-1}}{ (U^{\text{sp}})^\frac{1}{\mu}}  \ \text{ for }  \ 0< y <\infty,  
\ee
and 
\be\label{B bc}
   U^{\text{sp}}(0)= U_C= \frac{2\mu}{\gamma+1} , \  \ \lim_{y\to\infty} U^{\text{sp}} (y)  =U_A=0, \text{ and }  0=U_A< U^{\text{sp}} < U_C,
\ee
where $K>0$ is an integral constant. Moreover,  $U^{\text{sp}}$ enjoys the following asymptotic behavior: as $y\to 0^+$ 
\be\label{Asy at C}
U^{\text{sp}} (y) = U_C - \frac{U_C^\frac{1}{1-\mu}}{K^{\frac{\mu}{1-\mu}}} y^{\frac{\mu}{1-\mu}} + o (y^{\frac{\mu}{1-\mu}} ),  \ \  0<y \ll 1
\ee
and as $y\to \infty$, 
\be\label{Asy at A}
U^{\text{sp}}(y) =  K^\mu U_C^{1-\mu}\frac{1}{y^\mu} + o(\frac{1}{y^\mu}), \ \ y \gg  1  
\ee
\end{lemma}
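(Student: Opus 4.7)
The proof is essentially an exercise in separation of variables followed by careful inspection of the resulting implicit formula, so the plan is to do each piece in order and then verify the asymptotics by local inversion.

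First I would rewrite the ODE \eqref{ode B} in separated form. Since $y\,dU/dy = -\,[(\gamma+1)U-2\mu]U/[(\gamma+1)U-2]$, dividing through and using the sign analysis on the interval $0<U<U_C=2\mu/(\gamma+1)$ (where both numerator and denominator of the right-hand side are negative, so $dU/dy<0$ for $y>0$) gives
\[
\frac{(\gamma+1)U-2}{[(\gamma+1)U-2\mu]\,U}\,dU \;=\; -\frac{dy}{y}.
\]
A partial-fraction decomposition of the left-hand integrand yields coefficients $1/\mu$ on $1/U$ and $(\mu-1)/\mu$ on $1/(U-U_C)$; this is a short linear-algebra computation and the only non-mechanical step.

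Integrating both sides and exponentiating then produces the implicit relation
\[
U^{1/\mu}\,(U_C-U)^{(\mu-1)/\mu}\;=\;\frac{\tilde K}{y}
\]
for an integration constant $\tilde K>0$, which after absorbing the factor $(\gamma+1)^{(1-\mu)/\mu}$ into $K$ and using $1/\mu-1=(1-\mu)/\mu$ rearranges to the claimed formula \eqref{B formula}. I would then check that, viewed as a function of $U\in(0,U_C)$, the right-hand side $y(U)=K(U_C-U)^{(1-\mu)/\mu}/U^{1/\mu}$ is strictly decreasing (both factors combine monotonically because $(1-\mu)/\mu>0$ and $1/\mu>0$), so it defines a smooth bijection from $(0,U_C)$ onto $(0,\infty)$, giving a well-defined smooth $U^{\text{sp}}(y)$ on $(0,\infty)$. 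Monotonicity of $y(U)$, together with $\lim_{U\to U_C^-} y(U)=0$ and $\lim_{U\to 0^+}y(U)=\infty$, immediately yields the boundary values \eqref{B bc} and confirms consistency with the assumed sign $U^{\text{sp}}\in(0,U_C)$.

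For the two asymptotic expansions I would invert the implicit formula in each limit by replacing the slowly-varying factor by its limit value and inverting the dominant power. As $y\to 0^+$, $U\to U_C$ so $U^{1/\mu}\to U_C^{1/\mu}$, and from $y\sim K(U_C-U)^{(1-\mu)/\mu}/U_C^{1/\mu}$ one solves for $U_C-U$, obtaining the leading term $U_C-U\sim (U_C^{1/\mu}/K)^{\mu/(1-\mu)}\,y^{\mu/(1-\mu)} = U_C^{1/(1-\mu)} K^{-\mu/(1-\mu)}\,y^{\mu/(1-\mu)}$, which is precisely \eqref{Asy at C}. Symmetrically, as $y\to\infty$, $U\to 0$ so $(U_C-U)^{(1-\mu)/\mu}\to U_C^{(1-\mu)/\mu}$; from $y\sim K\,U_C^{(1-\mu)/\mu}/U^{1/\mu}$ one solves $U^{1/\mu}\sim K U_C^{(1-\mu)/\mu}/y$, giving $U\sim K^\mu U_C^{1-\mu}\,y^{-\mu}$, i.e.\ \eqref{Asy at A}. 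The error terms $o(\cdot)$ come from the next-order correction of the factor that was frozen, which is itself of one higher order in $y^{\mu/(1-\mu)}$ near $0$ (respectively $y^{-\mu}$ near $\infty$).

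The calculation has no real obstacle — the only place to be careful is the partial-fraction step, and the verification that the implicit formula actually parametrizes the full orbit from $C$ to $A$ (bijectivity of $y(U)$ on $(0,U_C)$); once this is in hand, both asymptotics are immediate by elementary inversion of a monotone power law.
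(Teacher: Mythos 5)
Your proposal is correct and follows essentially the same route as the paper: the paper integrates $dz/dU$ with $z=\ln y$ via the identical partial-fraction decomposition (coefficients $\tfrac1\mu$ on $\tfrac1U$ and $\tfrac{\mu-1}{\mu}$ on $\tfrac{1}{U-U_C}$), exponentiates to get \eqref{B formula}, and reads off \eqref{B bc}, \eqref{Asy at C}, \eqref{Asy at A} from that explicit formula. Your added verification that $y(U)$ is a strictly decreasing bijection of $(0,U_C)$ onto $(0,\infty)$ is a detail the paper leaves implicit, and your inversion of the two power laws reproduces the stated asymptotics exactly.
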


\begin{proof} Let $z=\ln y$ for $y>0$. Then \eqref{ode B} takes the form
\begin{align}
	\frac{dU}{dz} = -\frac{[(\ga+1)U-2\mu]U}{(\ga+1) U-2}
\end{align}
which can be written as 
\begin{align}
		&\cfrac{dz}{dU} = -\cfrac{(\ga+1) U-2}{[(\ga+1)U-2\mu]U} = -\cfrac{1}{\mu U}+(\frac{1}{\mu}-1)\cfrac{1}{U-\frac{2\mu}{\gamma+1}}. 
\end{align}
By integrating, we obtain 
\be
z= \alpha -\frac{1}{\mu}\ln |U| + (\frac{1}{\mu}-1)\ln | U- \frac{2\mu}{\gamma+1}|
\ee
where $ \alpha \text{ is an integral constant}$. Going back to $y= e^z$, we obtain the formula \eqref{B formula}, while   \eqref{B bc},  \eqref{Asy at C} and \eqref{Asy at A} readily follow from \eqref{B formula}. 
\end{proof}

\begin{remark}\label{remark_initial}
The integral constant $K>0$ in \eqref{ode B} can be related to the next order expansion of the initial data for the special solution when going back to original variables $(t,x)$. In fact, from \eqref{2.14} and \eqref{Asy at C}, we see that for $t=-T <0$ and sufficiently small $x>0$, 
\be\label{(3.22)}
u(-T,x) = \frac{\delta }{ -T } x U^{\text{sp}} (\frac{x}{T^\delta}) = - \frac{2 }{ (\gamma+1)T}  x +  \frac{U_C^\frac{1}{1-\mu}}{ \mu K^{\frac{\mu}{1-\mu}} T^{\frac{2-\mu}{1-\mu}}} x^{\frac{1}{1-\mu}} + o (x^\frac{1}{1-\mu})
\ee
where we have used $\delta= \frac{1}{\mu}$ to express the far-right had side in terms of $\mu$. The parameter $\mu\in (0,1)$ dictates the initial regularity of the profile: $u_0 \in C^{[\beta], \beta -[\beta]}$ if $\beta \neq \mathbb N$ and $u_0\in C^\infty$ if $\beta\in \mathbb N$, where $\beta = \frac{1}{1-\mu}\in (1,\infty)$.
\end{remark}

\subsection{Continuation through $A$, $E$ and $D$}

Once the trajectory of the ODE system \eqref{ode system} arrives at $A$, it may continue to the second quadrant in $(U,H)$ plane.   As moving from the first quadrant to the second quadrant, the extension may not be unique, in particular if one doesn't insist on the smoothness of the trajectory at the point $A$. When going from $A^-$ to $A^+$ ($t=0^-$ to $t=0^+$), the only reasonable extension for the physical velocity $u(t,x)$ (see \eqref{2.14} and \eqref{2.15})  
turns out to be via the special solution $ H^{\text{sp}}(U)= \frac{(\gamma-1)^2}{4}U^2$ for $U<0$, which is indeed the smoothest passage, and then follows the special solution through $E$ and subsequently $D$. In what follows, we discuss this continuation through $A$, $E$ and $D$ in detail.  

We first consider the extension of the solution via the special solution $H^{\text{sp}}(U)= \frac{(\gamma-1)^2}{4}U^2$ from $A^-$ to $A^+$ ($t=0^-$ to $t=0^+$). 
The same integration argument  in the proof of Lemma \ref{lemma 3.4} using $z= \ln |y|$ gives rise to the formula for the solutions of \eqref{ode B}
\be\label{formula special}
|y| = K \frac{ |U^\text{sp}-\frac{2\mu}{\gamma+1} |^{\frac{1}{\mu}-1}}{|U^\text{sp}|^\frac{1}{\mu}}
\ee
for $y$ irregardless of $y= \frac{x}{(-t)^\delta}$ or $y= \frac{x}{t^\delta}$. This motivates us to extend the solution $U^{\text{sp}}$ obtained in Lemma \ref{lemma 3.4} satisfying $0<U^{\text{sp}}<U_C$ for $0<y= \frac{x}{(-t)^\delta}<\infty$ and $t<0$,  to $U^{\text{sp}}<0$ in the region $0 <y= \frac{x}{t^\delta} <\infty$ and $t>0$ smoothly via the formula \eqref{formula special}. To ensure a smooth extension, we demand 
\be\label{B Bc at A}
\lim_{y\to \infty, t>0} U^{\text{sp}}(y) =\lim_{y\to \infty, t < 0} U^{\text{sp}}(y)=0 \ (=U_A)
\ee
 where the first $U$ is from \eqref{2.15} and the second $U$ is from \eqref{2.14}. 

 \begin{lemma}[Extension of $U^{\text{sp}}$ from $A$ to $E$] \label{lemma 3.6} Define $U^{\text{sp}}=U^{\text{sp}}(y)$ for $0<y<\infty$ by the following implicit formula
 \be\label{formula AE}
y= K \frac{ (\frac{2\mu}{\gamma+1} - U^{\text{sp}} )^{\frac{1}{\mu}-1}}{( - U^{\text{sp}})^\frac{1}{\mu}} \text{ for }0<y<\infty. 
\ee
Then $U^{\text{sp}}$ solves \eqref{ode B} and $-\infty = U_E< U^{\text{sp}} < U_A=0 $ with \eqref{B Bc at A}. Moreover, it holds that near  the point $A$ (as $y\to \infty$)
\be\label{Asy at E}
U^{\text{sp}}(y) = - K^\mu U_C^{1-\mu}\frac{1}{y^\mu} + o(\frac{1}{y^\mu}), \ \ y\to \infty 
\ee
and near the point $E$ (as $y \to 0^+$) 
\be\label{Uy0+}
U^{\text{sp}}(y)  = - \frac{K}{y} + o(\frac{1}{y}), \ \ y \to 0^+ . 
\ee
\end{lemma}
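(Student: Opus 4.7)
The plan is to mirror the integration argument from Lemma \ref{lemma 3.4}, but on the branch $U<0$ of \eqref{ode B}, and then read off the asymptotics from the closed-form implicit relation.

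First, I would verify that the right-hand side of \eqref{formula AE}, viewed as a map $\phi:(-\infty,0)\to(0,\infty)$ defined by $\phi(U)=K(U_C-U)^{\frac{1}{\mu}-1}(-U)^{-1/\mu}$ with $U_C=\tfrac{2\mu}{\gamma+1}$, is a continuous bijection. The two endpoint limits are immediate: as $U\to 0^-$, the factor $(-U)^{-1/\mu}$ blows up while the other factor approaches $U_C^{\frac{1}{\mu}-1}>0$, so $\phi\to+\infty$; as $U\to-\infty$, the quotient behaves like $|U|^{-1}$, so $\phi\to 0$. For strict monotonicity I would observe that for $U\in(-\infty,0)$, the right-hand side of \eqref{ode B} is strictly positive: all three factors $(\gamma+1)U-2\mu$, $U$ and $(\gamma+1)U-2$ are negative there. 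Therefore along any $C^1$ branch with $y>0$ one has $y\frac{dU}{dy}>0$, so $U$ is strictly increasing in $y$, which forces $\phi$ to be strictly increasing on $(-\infty,0)$. Define $U^{\text{sp}}:=\phi^{-1}$, which is then the unique function $(0,\infty)\to(-\infty,0)$ prescribed by \eqref{formula AE}.

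Next I would verify that this $U^{\text{sp}}$ satisfies \eqref{ode B}. Taking logarithms of \eqref{formula AE},
\[
\ln y = \ln K + \bigl(\tfrac{1}{\mu}-1\bigr)\ln(U_C-U^{\text{sp}}) - \tfrac{1}{\mu}\ln(-U^{\text{sp}}),
\]
and differentiating in $y$ yields, after combining the two fractions on the right over a common denominator,
\[
\frac{1}{y} \;=\; -\frac{(U^{\text{sp}})'\,\bigl[(\gamma+1)U^{\text{sp}}-2\bigr]}{U^{\text{sp}}\,\bigl[(\gamma+1)U^{\text{sp}}-2\mu\bigr]},
\]
which rearranges exactly to \eqref{ode B}. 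The condition \eqref{B Bc at A} as $y\to\infty$ on the $t>0$ side is then the statement $\phi(U)\to\infty$ as $U\to 0^-$; combined with the matching value $U_A=0$ obtained for $t<0$ in \eqref{B bc}, this realizes the continuous passage through the point $A$, with the same integration constant $K$ appearing on both sides.

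Finally, the two asymptotic expansions are extracted directly from \eqref{formula AE}. Near $A$ ($y\to\infty$, $U^{\text{sp}}\to 0^-$), replacing $U_C-U^{\text{sp}}$ by its limit $U_C$ gives $y\sim K\,U_C^{\frac{1}{\mu}-1}(-U^{\text{sp}})^{-1/\mu}$, and inverting yields \eqref{Asy at E}. Near $E$ ($y\to 0^+$, $U^{\text{sp}}\to-\infty$), the factor $(U_C-U^{\text{sp}})^{\frac{1}{\mu}-1}$ behaves like $(-U^{\text{sp}})^{\frac{1}{\mu}-1}$, so the relation collapses to $y\sim K/(-U^{\text{sp}})$, producing \eqref{Uy0+}. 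I do not anticipate a substantial obstacle: the separation of \eqref{ode B} works identically on $U<0$ as on $0<U<U_C$, so the same integration constant $K$ that parametrizes the $t<0$ trajectory transfers unchanged to parametrize its smooth continuation for $t>0$.
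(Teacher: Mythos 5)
Your proposal is correct and follows essentially the same route as the paper, which simply invokes the integration argument of Lemma \ref{lemma 3.4} on the branch $U<0$ and reads the claims off the implicit formula \eqref{formula AE}; your logarithmic differentiation and endpoint asymptotics all check out. The only (easily repaired) blemish is that your monotonicity step borrows the sign of $y\,dU/dy$ from \eqref{ode B} before you have shown that $\phi^{-1}$ solves it — better to note directly that $\frac{d}{dU}\ln\phi=\frac{\mu U-U_C}{\mu U(U_C-U)}>0$ for $U<0$, which is exactly the computation you perform anyway when verifying the ODE.
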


\begin{proof} All the claims follow directly from the formula \eqref{formula AE}, which solves \eqref{ode B}. 
\end{proof}

Next we show that the special solution $H^{\text{sp}}(U)$ is the unique choice of the extension in the region $\{(t,x): t\in (-T,T), \ x >0\}$, as a self-similar weak solution to the Euler system \eqref{Euler}. 

\begin{lemma}\label{PassA} Let $\gamma\in(1,3)$ and $\mu\in(0,1)$ be given. Suppose that the solution $(U,H)$ to the self-similar ODE system \eqref{ode system} where $(U,H)$ is induced by the special solution for $t<0$ yields a weak solution $(\rho, u)$ via \eqref{2.14} and \eqref{2.15} to the Euler system for all $t\in (-T,T)$. 
Then the trajectory in the first quadrant for $U>0$ and $t<0$ must pass through the point $A=(0,0)$  and continue to the second quadrant for $U<0$ and $t>0$ by following the special solution $H^{\text{sp}}(U)=  \frac{(\gamma-1)^2}{4}U^2$. 
\end{lemma}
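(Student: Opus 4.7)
The plan is to combine Rankine--Hugoniot matching across the stationary interface $\{t=0,\ x>0\}$ with a local uniqueness analysis of \eqref{ode system} at the stable node $A$. Since $(U,H)$ is the special solution for $t<0$, the simple-wave identity $h=\tfrac{\gamma-1}{4}u^2$ (equivalently $w=u+\tfrac{2}{\gamma-1}c\equiv 0$) holds on $\{t<0,\,x>0\}$, and combining \eqref{2.14}, the relation $\delta\mu=1$, and the asymptotic \eqref{Asy at A} of Lemma \ref{lemma 3.4} yields the finite non-trivial traces $u(0^-,x)=-\delta K^{\mu}U_C^{1-\mu}\,x^{1-\mu}<0$ and $h(0^-,x)=\tfrac{\gamma-1}{4}u(0^-,x)^2>0$ for each $x>0$; in particular $\rho^->0$ in a neighborhood of the interface.

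Next, I apply the weak formulation of \eqref{Euler} to test functions compactly supported near an interior point $(0,x_0)$, $x_0>0$. Integration by parts in $t$ isolates the jumps of $\rho$ and $\rho u$ across $\{t=0,\,x>0\}$, with the $x$-flux terms contributing nothing because the interface is stationary, giving the Rankine--Hugoniot relations $[\rho]_{0^-}^{0^+}=0$ and $[\rho u]_{0^-}^{0^+}=0$. Combined with $\rho^->0$, these force continuity of both $u$ and $h$ across the interface, so $w(0^+,x)=0$ and $h(0^+,x)=\tfrac{\gamma-1}{4}u(0^+,x)^2$ for every $x>0$.

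I then compare with any admissible extension for $t>0$. Since $\Delta(A)=1\neq 0$, linearizing \eqref{ode system} at $A$ gives the matrix with eigenvalues $-\mu$ (slow, along the $U$-axis) and $-2\mu$ (fast), so every trajectory reaching $A$ as $y\to\infty$ admits the principal expansion $U(y)=C_2 y^{-\mu}+O(y^{-2\mu})$, $H(y)=C_1 y^{-2\mu}+O(y^{-3\mu})$. Substituting into \eqref{2.15} and letting $t\to 0^+$ with $x>0$ fixed (using $\delta\mu=1$) produces $u(0^+,x)=\delta C_2\,x^{1-\mu}$ and $h(0^+,x)=\delta^2 C_1 (\gamma-1)^{-1} x^{2(1-\mu)}$, and the matching of the previous paragraph uniquely determines
\[
C_2=-K^{\mu}U_C^{1-\mu}, \qquad C_1=\tfrac{(\gamma-1)^2}{4}\,C_2^{2},
\]
which are precisely the principal coefficients of the special solution on the $U<0$ branch (cf.\ Lemma \ref{lemma 3.6}). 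By ODE uniqueness of trajectories at a hyperbolic attracting node once both eigenvalue-coefficients are prescribed, the $t>0$ trajectory must coincide with $H^{\text{sp}}$.

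The subtlety I expect is exactly that uniqueness step: because both eigenvalues at $A$ have the same sign, a purely saddle-style argument (specifying a single decay direction) would leave a one-parameter family, and one genuinely needs the full principal part --- both $C_2$ and $C_1$ --- to be fixed, which is what Rankine--Hugoniot provides. A robust alternative available as a fallback is the transport equation for $w$ in \eqref{zw}: continuity forces $w\equiv 0$ on $\{t=0,\,x>0\}$, and this propagates along $\lambda_2$-characteristics into $t>0$; combined with self-similarity, it forces $H=H^{\text{sp}}(U)$ on the whole $t>0$ region, completing the proof.
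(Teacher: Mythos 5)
Your argument is correct and follows essentially the same route as the paper: the paper likewise tests the weak formulation against functions supported near an interior point $(0,x_0)$ with $x_0>0$, extracts the jump of $\rho$ (and $\rho u$) across the slice $\{t=0\}$, and concludes that the leading coefficients $U_1,H_1$ of the expansion at the node $A$ must coincide with those of the special solution --- the only cosmetic difference being that the paper argues in the contrapositive (any extension with $H_1\neq H_s$ violates the weak formulation) rather than by direct Rankine--Hugoniot matching. Your explicit identification of the eigenvalues $-\mu$ and $-2\mu$ at $A$, and the observation that both principal coefficients must be prescribed to single out the trajectory at an attracting node, is a slightly more careful rendering of the uniqueness step the paper uses implicitly.
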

\begin{proof}
In order to prove that only the special solution $H^{\text{sp}}(U)= \frac{(\gamma-1)^2}{4}U^2$ gives rise to a weak solution to the Euler  system, we need to show that any other choice of continuation of solutions to the ODE to the left side of $A$ (in the second quadrant) does not lead to a weak solution to the original system, whereas the special solution does. We first show that other choices do not yield weak solutions.

By the classical local analysis of the ODE system around $A$, where $A$ corresponds to $y=+\infty$, the solution should have the asymptotic behavior:
\begin{align}
 	U(y) &= U_1 y^{-\mu} + o(y^{-\mu}),\\
	H(y) &= H_1 y^{-2\mu} + o(y^{-2\mu}),
\end{align}
where $U_1H_1 \neq 0$. For any non-special solutions, $\frac{U_1^2}{H_1} \neq \frac{(\gamma-1)^2}{4}$. Thus, at least one of $U_1$ or $H_1$ differs from the special solution. Without loss of generality, we assume that $H_1$ is different from that of the special solution $H_s= \frac{(\gamma-1)^2}{4} K^{2\mu} U_C^{2(1-\mu)}$. Then, plugging this profile back into the ansatz \eqref{2.14} and \eqref{2.15}, we obtain that for any $0<x<1$,
\begin{align}
 	h(t,x) \to \frac{H_s}{\ga-1} x^{2-2\mu} + o(x^{2-2\mu}), \ \text{as} \ t \to 0^-,\\
	h(t,x) \to \frac{H_1}{\ga-1} x^{2-2\mu} + o(x^{2-2\mu}), \ \text{as} \ t \to 0^+.
\end{align}
Since $H_1 \neq H_s$, we deduce that for any fixed $0<x_0<1$, $h(t,x_0)$ experiences a jump across $t=0$, while $u$ and $h$ are smooth in each region $\{(t,x): t<0, x>0\}$ and $\{(t,x): t>0, x>0\}$. Now for any fixed $0<x_0<1$, consider a test function $\phi \in C_c^1 ((-\epsilon, \epsilon) \times (x_0-\sigma,x_0+\sigma))$ for sufficiently small $\epsilon >0$ and $\sigma>0$ satisfying $\phi (0,x_0) \neq 0$ and $\int_{x_0-\sigma}^{x_0+\sigma} x^{\frac{2-2\mu}{\gamma-1}} \phi (0, x)  dx \neq 0$. Then 
\begin{align}
	&\int_{-T}^T\int_{\Omega(t)} \rho \phi_t+\rho u  \phi_x d x d t  = \int_{-\epsilon}^\epsilon \int_{x_0-\sigma}^{x_0+\sigma} \rho \phi_t+\rho u  \phi_x d x d t \notag\\  
	&= \int_{-\epsilon}^\epsilon \int_{x_0-\sigma}^{x_0+\sigma} \Big(\rho_t +(\rho u)_x\Big) \phi \, dx \, dt + \int_{x_0-\sigma}^{x_0+\sigma}(\rho \phi)|_{t=0^+} - (\rho \phi)|_{t=0^-} \, dx\label{weak sol} \\
	&= \int_{x_0-\sigma}^{x_0+\sigma} \Big(\frac{H_1^\frac{1}{\gamma-1}-H_s^\frac{1}{\gamma-1}}{(\ga-1)^{\frac{1}{\gamma-1}}} x^{\frac{2-2\mu}{\gamma-1}} + o( x^{\frac{2-2\mu}{\gamma-1}} )\Big) \phi \, dx \neq 0, \text{ if } H_1\neq H_s\notag
\end{align} 
which shows that the corresponding extension with $H_1\neq H_s$ does not yield a weak solution. Of course, it is clear that the extension by the special solution guarantees the continuity for all $x>0$ across $t=0$ as well as smoothness for each region $t<0$ and $t>0$. Therefore, the conclusion follows. 
\end{proof}

We now discuss the detailed behavior of the solution that we have constructed so far in terms of the physical velocity $u(t,x)$ near $(0,0)$, the first point of the singularity for $x>0$.

\begin{remark}\label{remark3.8}
Substituting \eqref{Asy at C} into the ansatz \eqref{2.14}, we obtain the following behavior for the physical velocity $u$ when $x = o((-t)^\delta)$ and $t \to 0^-$ (in the sense of $y \sim 0$) 
$u(t,x) = \frac{2}{\ga+1}\frac{ x}{t} + o(x)$. Hence, the initial behavior given in \eqref{(3.22)} persists in a shrinking region $x = o((-t)^\delta)$ as $t \to 0^-$, and it will undergo the change of behavior. 
In particular,  in the regime where $x>0$ is fixed, $t\to 0^\pm$ and $y = \frac{x}{|t|^\delta}\to+\infty$, from \eqref{Asy at E}, the physical velocity $u$ behaves like 
\be\label{u at A}
	u(t,x) =- \delta K^\mu U_C^{1-\mu} x^{1-\mu} + o(|t|), \quad x>0. 
\ee
\end{remark}

The behavior \eqref{u at A} is valid for any fixed $x>0$ and sufficiently small $|t|$ ($x \gg |t|^\delta$). On the other hand, the solutions constructed so far do not reveal much in the region $x \le  |t|^\delta$ including $x=0$.  

We can further extend the solution obtained in Lemma \ref{lemma 3.6} for $0<y<\infty$ and $t>0$ to the region $y\le 0$ and $t>0$ by requiring the continuity of $yU(y)$ at the point $E$ from $y=0^+$ to $y=0^-$ 
\be\label{condition}
\lim_{y\to 0^-} y U^{\text{sp}}(y) =  \lim_{y\to 0^+} y U^{\text{sp}}(y) =-  K 
\ee
so that $U>0$ for $y<0$ until the special solution reaches the sonic point, namely $U=U_D = \frac{2}{3-\gamma} >0$. To this end, we first define $y_D$ as 
\be\label{yD}
y_D = - K \frac{ (U_D - \frac{2\mu}{\gamma+1})^{\frac{1}{\mu}-1}}{ U_D^\frac{1}{\mu}} <0. 
\ee  

\begin{lemma}[Extension of $U^{\text{sp}}$ from $E$ to $D$] \label{lemma 3.7}
Define $U^{\text{sp}}=U^{\text{sp}}(y)$ for $y_D< y<0$ by the following 
\be
y= - K \frac{ (U^{\text{sp}} - \frac{2\mu}{\gamma+1})^{\frac{1}{\mu}-1}}{ (U^{\text{sp}})^\frac{1}{\mu}} \ \ \text{for} \ \   y_D< y<0. 
\ee
Then $U^{\text{sp}}$ solves \eqref{ode B} with \eqref{condition}, and $\frac{2}{3-\gamma}=U_D < U^{\text{sp}} <U_E=\infty $. 
 \end{lemma}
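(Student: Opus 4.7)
The plan is to mirror the integration argument of Lemma \ref{lemma 3.4} (which is exactly what Lemma \ref{lemma 3.6} also relies on). The ODE \eqref{ode B} does not depend on the sign of $y$ through anything other than the factor $y\,dU/dy$, so on the branch $y<0$ the substitution $z=\ln|y|$ gives
\[
\frac{dz}{dU} = -\frac{(\gamma+1)U-2}{[(\gamma+1)U-2\mu]\,U} = -\frac{1}{\mu U}+\Big(\frac{1}{\mu}-1\Big)\frac{1}{U-U_C},
\qquad U_C=\tfrac{2\mu}{\gamma+1}.
\]
Integration and exponentiation produce $|y|=K(U-U_C)^{1/\mu-1}/U^{1/\mu}$ for $U>U_C$, which, after imposing $y<0$, is exactly the implicit relation of the lemma. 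Thus any $U^{\text{sp}}$ defined by the stated formula automatically solves \eqref{ode B}; no additional check is required here.

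Next I would show that the formula defines a smooth bijection $U^{\text{sp}}\colon(y_D,0)\to(U_D,\infty)$. Let $f(U):=-K(U-U_C)^{1/\mu-1}/U^{1/\mu}$ on $U>U_C$. Logarithmic differentiation gives
\[
\frac{f'(U)}{f(U)}=\frac{U_C-\mu U}{\mu U(U-U_C)},
\]
which, together with $f(U)<0$, yields $f'(U)>0$ precisely on $(U_C/\mu,\infty)$. The key elementary inequality $U_C/\mu=\tfrac{2}{\gamma+1}<\tfrac{2}{3-\gamma}=U_D$ holds for every $\gamma\in(1,3)$, so $f$ is strictly increasing on $(U_D,\infty)$. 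Combined with $f(U_D)=y_D$ (which is the very definition \eqref{yD}) and the clear asymptotic $f(U)\to 0^-$ as $U\to\infty$, one obtains the claimed bijection, and hence the bound $U_D<U^{\text{sp}}<U_E=\infty$.

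Finally, for the matching condition \eqref{condition} at the point $E$, I would rewrite the implicit formula as
\[
y\,U^{\text{sp}}=-K\Big(1-\tfrac{U_C}{U^{\text{sp}}}\Big)^{1/\mu-1}.
\]
Since $U^{\text{sp}}\to\infty$ as $y\to 0^-$, the right-hand side tends to $-K$, matching the corresponding limit from Lemma \ref{lemma 3.6} (cf.\ \eqref{Uy0+}). No step here presents a genuine obstacle; the only point that deserves explicit mention is verifying $U_C/\mu<U_D$, which ensures that the branch of $f$ on which we invert is strictly monotonic and that the natural endpoints are $y_D$ and $0^-$ rather than the interior critical point $U_C/\mu$.
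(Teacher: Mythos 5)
Your proposal is correct and follows essentially the same route as the paper, whose entire proof is the one-line assertion that ``the claims directly follow from the formula'': you integrate \eqref{ode B} via $z=\ln|y|$ exactly as in Lemma \ref{lemma 3.4} and then read the claims off the resulting implicit relation. The only substantive content you add is the explicit monotonicity check --- that $f'(U)>0$ on $(U_C/\mu,\infty)$ together with $U_C/\mu=\tfrac{2}{\gamma+1}<\tfrac{2}{3-\gamma}=U_D$ for $\gamma\in(1,3)$ --- which the paper leaves implicit but which is indeed the point guaranteeing that the formula defines a genuine bijection $(y_D,0)\to(U_D,\infty)$; this is a welcome, correct completion of the omitted details.
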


\begin{proof} The claims directly follow from the formula. 
\end{proof}

We observe that once the solution is extended through $E$, the solution trajectory in $(U,H)$ plane is moved from the second quadrant to the first quadrant such that $U>0$ and $H>0$. 

The continuity of the extension $yU^{\text{sp}}$ at $y=0$ (at the point $E$) (see \eqref{condition}) is deciphered into the physical velocity as 
\be\label{u at E}
u(t,x) = \delta t^{\delta-1} y U(y) = -K \delta  t^{\delta-1}+o(t^{\delta-1}),  \text{ for }   |x| \leq \epsilon t^\delta  \ \text{and} \ t>0 
\ee
for all sufficiently small $\epsilon>0$. This in particular ensures H\"{o}lder continuity in time near $x=0$, and it also shows a drastic change of behavior of $u$ from  \eqref{u at A} in the region $x\gg t^\delta$ to \eqref{u at E} in the region $|x| \ll t ^\delta$.  

It is evident that any other choice of the leading order coefficient of $y U $  for $y\sim 0^-$ that differs from $-K$ will result in a discontinuity at $x = 0$ for any time $t>0$. As discontinuities are allowed for the compressible Euler flows, one might explore the possibility of extensions at $x=0$ via shock discontinuities.  
 However, as noted by Jenssen \cite{Jenssen} this type of solution will lead to only a partially defined solution, which we discuss in the following remark.  
 
\begin{remark}\label{E smooth extension}
First of all, the shock could only be a 2-shock since the shock speed is 0, and the solution with the shock profile satisfies the Rankine–Hugoniot conditions and entropy condition:
\begin{align}
	\rho_-u_- &= \rho_+u_+,\label{E1}\\
	\rho_-u_-^2+p_- & = \rho_+u_+^2+p_+,\label{E2}\\
	u_-+\sqrt{p'(\rho_-)} > &0 > u_++\sqrt{p'(\rho_+)} ,\label{E3}
\end{align}
where the subscript "-" and "+" always refer to the evaluation at the left and right of $x=0$ in physical space, respectively. 
Moreover, from \eqref{2.15}, we have $u_-<0$. Then, together with \eqref{pressure} and \eqref{E3} we have
\be
	u_-+\sqrt{p'(\rho_-)} = u_-+\sqrt{(\ga-1)h_-}>0 \Longrightarrow h_-> \frac{1}{\ga-1} u_-^2.
\ee 
Returning to the self-similar ansatz \eqref{2.15}, the state $(u_-, h_-)$ with $ h_- > \frac{1}{\gamma - 1} u_-^2$ corresponds to a point $(U_-, H_-)= (+\infty, +\infty)$ where 
$H_- : = \lim_{y\to 0^-}H(y)=+\infty$, $U_- : = \lim_{y\to 0^-}U(y)=+\infty$ and $\frac{H_-}{U_-^2} > 1$. From the equation $F=2H[H-(U^2-k_1U+\mu)]$, it is clear that $(U,H)$ ($y<0$) continued from $(U_-,H_-)$ lies above the curve ${F = 0}$ in the first quadrant of $(U,H)$ plane. In this case, both $ U'(y) > 0$ and $ H'(y) > 0$, and the flow descends as $y$ decreases. 
Based on the phase portrait analysis, there are four possible scenarios, as depicted in Figure \ref{fig2}:
\begin{enumerate} 
\item The solution reaches the curve $ H(U) = \frac{(\gamma - 1)^2}{4} U^2$ (special solution) at a point other than $ D$. 
\vspace{-1.5mm}
\item The solution reaches the curve ${\Delta = 0}$ (sonic curve) at a point other than  $ D$. 
\vspace{-1.5mm}
\item The solution reaches the curve ${F = 0}$ before reaching $ D$. 
\vspace{-1.5mm}
\item The solution reaches $ D$. 
\end{enumerate}
Case 1 is impossible due to the uniqueness of the solution to the ODE system \eqref{ODE}  at a regular point.
 If Case 2 occurs, both $ H'(y)$ and $ U'(y)$ tend to infinity, and the trajectory can't continue uniquely, which is not the solution we seek.  
For Case 3, we proceed by contradiction. Let $ H = H_-(U)$ represent the solution continued from $ (U_-, H_-)$, and denote $ H = H_F(U)$ as the curve ${F = 0}$. Suppose the solution curve first intersects $ H = H_F(U)$ at $ U_0 \in (U_D, \infty)$. Then, $ H_-(U) > H_F(U)$ for all $ U \in (U_0, \infty)$, and $ H_-'(U_0) = 0$. However, $ H_F'(U_0) > 0 = H_-'(U_0)$, which contradicts the assumption that $ H_-(U) > H_F(U)$ for all $ U \in (U_0, \infty)$. Therefore, Case 3 cannot occur. 
In Case 4, we have that $ H_F'(U_D) = 2U_D - k_1 = \frac{\gamma^2 - 2\gamma + 5 - \mu(3 - \gamma)^2}{2(3 - \gamma)} > C_2(U_D, H_D)$, where $ C_2(U_D, H_D)$ is defined in \eqref{C2D}. From Lemma \ref{Le: integral curves at D}, it follows that all other solution curves, aside from the special solution curve, converge to $ D$ with slope $ C_2(U_D, H_D)$ at $ D$. Consequently, the solution curve from $ (U_-, H_-)$ cannot reach $ D$.
\end{remark}

\section{Connection from $B$ to $D$}\label{Sec 4}

What remains is to show the existence of a trajectory  connecting $D$ and $B$. 
We first analyze the point $D$.
\begin{lemma}\label{der at D}
	For any $\ga\in(1,3)$ and $\mu\in(0,1)$, there are two distinct branches at point $D$. 
\end{lemma}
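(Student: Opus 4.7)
The plan is to mirror the argument used in Lemma \ref{der at C}, but now at the triple point $D$ where $\Delta$ also vanishes. Since $F(U_D, H_D) = G(U_D, H_D) = 0$, L'H\^opital's rule (equivalently, inserting the ansatz $H(U) = H_D + m(U - U_D) + o(U - U_D)$ into $dH/dU = F/G$ and matching leading orders) yields the quadratic
\begin{equation*}
  G_H(U_D, H_D)\, m^2 \;+\; \bigl(G_U(U_D, H_D) - F_H(U_D, H_D)\bigr)\, m \;-\; F_U(U_D, H_D) \;=\; 0,
\end{equation*}
whose two roots are the admissible slopes at $D$. As a first preliminary I verify that $G_H(U_D, H_D) = U_D + k_2 \neq 0$ for $\gamma \in (1,3)$ and $\mu \in (0,1)$, which follows by direct substitution from \eqref{k2} and an elementary sign check; this ensures the relation is genuinely quadratic.

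Next I identify one root for free. By Lemma \ref{L: special solution} and Remark \ref{remark3.1}, the special curve $H^{\text{sp}}(U) = \tfrac{(\gamma-1)^2}{4}U^2$ passes through $D$ and solves the ODE, so its slope
\begin{equation*}
  C_1(U_D, H_D) \;:=\; (H^{\text{sp}})'(U_D) \;=\; \tfrac{(\gamma-1)^2}{3-\gamma}
\end{equation*}
is automatically a root. The other root $C_2(U_D, H_D)$ is then recovered from Vieta's formulas as $C_2 = -F_U/(G_H\, C_1)$. To put this into closed form I would use $F = 0$ at $D$ together with $H_D > 0$ and $H_D = (U_D-1)^2$, which gives $H_D = U_D^2 - k_1 U_D + \mu$, and then compute $F_H = 2H_D$, $F_U = -2H_D(2U_D - k_1)$, $G_H = U_D + k_2$, $G_U = H_D - 3U_D^2 + 2(1+\mu)U_D - \mu$.

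The actual content of the statement is showing that the two roots are distinct: $C_1 \neq C_2$. Equivalently I need the discriminant of the quadratic to be strictly positive, i.e., $G_H\, C_1^2 + F_U \neq 0$ at $D$. I expect this quantity, after clearing denominators by a manifestly positive factor, to reduce to the polynomial expression $(3-\gamma)\bigl[(3-\gamma) - \mu(5 - 3\gamma)\bigr]$, whose strict positivity on $(\gamma, \mu) \in (1,3) \times (0,1)$ I would verify by splitting into two cases: if $\gamma \le 5/3$ then $5 - 3\gamma \ge 0$ and one checks $\tfrac{3-\gamma}{5-3\gamma} \ge 1 > \mu$; if $\gamma > 5/3$ then $3 - \gamma > 0$ and $-(5-3\gamma) > 0$, so both summands are positive. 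The only obstacle is the algebraic bookkeeping in this simplification; conceptually the step is identical to the one already carried out at $C$, and once it is done the closed form of $C_2(U_D, H_D)$ will be available for use in Section \ref{Sec 4} (in particular for the barrier construction referenced in the sketch of the proof of Theorem \ref{main theorem}).
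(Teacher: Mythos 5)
Your proposal is correct and is essentially the paper's argument: the paper likewise applies L'H\^opital's rule at $D$ to get the quadratic for the slope, computes the two roots $C_1(U_D,H_D)=\tfrac{(\gamma-1)^2}{3-\gamma}$ and $C_2(U_D,H_D)$ explicitly, and verifies distinctness (indeed $C_2>C_1>0$) by checking positivity of exactly the polynomial $(3-\gamma)\bigl[(3-\gamma)-\mu(5-3\gamma)\bigr]$ that your discriminant/Vieta reduction produces. Your shortcut of reading off $C_1$ from the special solution and recovering $C_2$ from Vieta is a minor organizational variant, and your case-split positivity check matches the paper's computation.
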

\begin{proof}
	Direct computation shows that there are two distinct branches (slopes)
	\begin{align}
		C_1(U_D,H_D) &= \frac{(\ga-1)^2}{3-\ga}>0, \\ 
		 C_2(U_D,H_D) &= \frac{(\ga-1)\Big[-(\ga-3)^2\mu+\ga^2-2\ga+5\Big]}{2(3-\ga)[(\ga-3)\mu+2]}>\frac{(\ga-1)^2}{2(3-\ga)[(\ga-3)\mu+2]}>0.\label{C2D}
	\end{align}
	Moreover, we check
	\begin{align*}
		-(\ga-3)^2\mu+\ga^2-2\ga+5 - 2(\ga-1)[(\ga-3)\mu+2] = (3-\ga)[(3\ga-5)\mu+3-\ga]>0.
	\end{align*}
	Hence, $C_2(U_D,H_D)>C_1(U_D,H_D)>0$.
\end{proof}

The branch associated with the slope $C_1(U_D,H_D)$ is the one corresponding to the special solution $ H^{\text{sp}}(U)= \frac{(\gamma-1)^2}{4}U^2$. If the trajectory continues along this branch from $D$ to the left, it will return to $C$ and to $A$ in the phase-portrait $(U,H)$ plane. 
 The trajectory approaches $A$ again with $y\to -\infty$. In particular, the resulting solution $(u(t,x),h(t,x))$ would be  defined for every point $(t,x)$ with $t>0$ 
 and $x<0$.  In that case, for any fixed $x<0$, $t\to 0^+$ corresponds to $y\to -\infty$, and hence by using \eqref{2.15} and \eqref{Asy at E}, we have
\begin{align}
	h(t,x) = \delta^2 \tau^{2(\delta-1)} y^2 \frac{H(y)}{\gamma-1} \sim \tilde K |x|^{2(1-\mu)}\quad \text{as }t\to 0^+
\end{align}
where $\tilde K$ is a nonzero and positive constant. However, this doesn't give a meaningful solution to our original problem \eqref{Euler} as $t=0^+$ is not a proper boundary, and moreover, it doesn't yield the vacuum boundary. 
 In order to connect $B$, it is then clear that the trajectory between $B$ and $D$ must connect $D$ with the other slope  $C_2(U_D,H_D)$ as it approaches $D$ from the left. Therefore, the solution cannot be $C^1$  at point $D$, but only Lipschitz continuous. 


Now, we analyze the regularity for the solution curves of \eqref{ode system} that pass through $D$.
\begin{lemma}\label{Le: integral curves at D}
	Let $\ga\in(1,3)$ and $\mu\in(0,1)$. There is only one integral curve that passes through $D$ with $\frac{dH(U)}{dU}\mid_{U=U_D}=C_1$. Specifically, the integral curve is the special solution $H(U)=\frac{(\ga-1)^2}{4}U^2$. All the other integral curves, passing through $D$, have $\frac{dH(U)}{dU}\mid_{U=U_D}=C_2$.
\end{lemma}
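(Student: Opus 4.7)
The plan is to linearize the equation \eqref{ODE} at $D$, reduce it to a scalar autonomous ODE for the slope $m = (H-H_D)/(U-U_D)$, and then classify integral curves at $D$ by standard one-dimensional stability analysis of its two equilibria $m=C_1$ and $m=C_2$ (a Briot--Bouquet style argument).

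Setting $\xi = U-U_D$, $\eta = H-H_D$ and expanding $F = F_U\,\xi + F_H\,\eta + O(\xi^2+\eta^2)$ and $G = G_U\,\xi + G_H\,\eta + O(\xi^2+\eta^2)$ with all partials evaluated at $D$, substitution of $\eta = m\xi$ into $d\eta/d\xi = F/G$ yields, after a short calculation,
\begin{equation*}
\xi \frac{dm}{d\xi} = \frac{-G_H\,(m - C_1)(m - C_2)}{G_U + G_H\,m} + O(\xi),
\end{equation*}
the factorization of the numerator coming from the characteristic slope equation at $D$ used in the proof of Lemma \ref{der at D}. Passing to $\sigma = \ln|\xi|$ (so that $\xi\to 0$ corresponds to $\sigma\to-\infty$), this becomes the autonomous equation $dm/d\sigma = g(m) + O(e^\sigma)$ with
\begin{equation*}
g'(C_1) = \frac{G_H(C_2 - C_1)}{G_U + G_H\,C_1}, \qquad g'(C_2) = -\,\frac{G_H(C_2 - C_1)}{G_U + G_H\,C_2}.
\end{equation*}

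The crux is then a sign check of $g'(C_1)$ and $g'(C_2)$ at $D$. Since $U_D = 2/(3-\ga)>0$ and $k_2 = 2(1-\mu)/(\ga-1)>0$, one has $G_H|_D = U_D + k_2 > 0$, and Lemma \ref{der at D} already gives $C_2 - C_1 > 0$. The remaining ingredient is to substitute $U_D$, $H_D$, $k_1$, $k_2$ and the explicit forms of $C_1, C_2$ into $G_U = H - 3U^2 + 2(1+\mu)U - \mu$ and verify that both quantities $G_U + G_H\,C_1$ and $G_U + G_H\,C_2$ — which are the eigenvalues of the linearization of the autonomous system $(\dot U,\dot H) = (G,F)$ at $D$ — are strictly negative for all $\ga\in(1,3), \mu\in(0,1)$, consistently with the paper's description of $D$ as a node. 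This yields $g'(C_1) < 0$ and $g'(C_2) > 0$.

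Finally, since $\xi\to 0$ corresponds to $\sigma\to-\infty$, the equilibrium $C_1$ is repelling and $C_2$ is attracting in that limit. By elementary 1D dynamics, the only solution of the slope ODE with $m(\sigma)\to C_1$ is the constant branch (up to higher-order corrections), while a one-parameter family satisfies $m(\sigma)\to C_2$. Translating back, exactly one integral curve of \eqref{ODE} arrives at $D$ with slope $C_1$, and every other one arrives with slope $C_2$. The special solution $H^{\text{sp}}(U) = \frac{(\ga-1)^2}{4}U^2$ from Lemma \ref{L: special solution} passes through $D$ with
\[
(H^{\text{sp}})'(U_D) = \tfrac{(\ga-1)^2}{2}\,U_D = \tfrac{(\ga-1)^2}{3-\ga} = C_1,
\]
so by uniqueness it is the sole integral curve through $D$ with slope $C_1$. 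The main technical obstacle is the uniform sign verification of the two eigenvalues of the linearization across the full parameter range $(\ga,\mu)\in(1,3)\times(0,1)$, which is purely algebraic but does require careful tracking of the polynomial expression for $G_U|_D$; the rest of the argument is a routine application of the Briot--Bouquet theorem.
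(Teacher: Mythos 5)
Your argument is correct, but it takes a genuinely different route from the paper. You reduce to a scalar Briot--Bouquet equation for the slope $m=(H-H_D)/(U-U_D)$ and classify the two characteristic directions by the signs of $g'(C_1)$ and $g'(C_2)$ on the phase line as $\sigma=\ln|\xi|\to-\infty$; the paper instead rescales time via $y\Delta\,\tfrac{d}{dy}=\tfrac{d}{ds}$, linearizes the full two-dimensional system \eqref{ODE of s} at $D$, and invokes the Poincar\'e--Dulac normal form to write every trajectory as $z_1\vec v_1e^{\lambda_1 s}+z_2\vec v_2e^{\lambda_2 s}+\mathcal O(\cdot)$ with $\lambda_2<\lambda_1<0$, reading off the limiting slope from the dominance of the slow mode $e^{\lambda_1 s}$ (eigenvector slope $C_2$) unless $z_1=0$. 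The two arguments rest on the same data: your quantities $G_U+G_H C_1$ and $G_U+G_H C_2$ are exactly the paper's eigenvalues $\lambda_2$ and $\lambda_1$ (the eigenvector of slope $C_i$ satisfies $\lambda=G_U+G_H C_i$), so your required sign check is precisely the paper's computation $\lambda_1=\tfrac{2(\ga-1)(\mu-1)}{3-\ga}<0$, $\lambda_2=\tfrac{(3-\ga)\mu-\ga-1}{3-\ga}<0$, which is manifestly negative on all of $(\ga,\mu)\in(1,3)\times(0,1)$; you flag this as the main obstacle but do not carry it out, and you should, since it is a two-line verification rather than a delicate polynomial inequality. What your approach buys is economy for this lemma alone: a one-dimensional phase-line argument with no normal-form machinery and no case split on the resonance $\lambda_2/\lambda_1\in\mathbb N$. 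What the paper's approach buys is the quantitative expansion \eqref{H(s), U(s) at D}, which is reused immediately in Lemma \ref{Le: asy at D} to extract the $(y-y_D)$ and $(y-y_D)^\beta$ terms of $U$ and $H$; your slope equation yields the classification of tangent directions but not those expansions, so the paper's heavier argument is not wasted effort in context. One small point to make explicit if you keep your route: since $G_H|_D=U_D+k_2>0$, there is no vertical characteristic direction at $D$, so every orbit entering the node does approach along one of the two slopes $C_1$, $C_2$ and your dichotomy is exhaustive.
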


\begin{proof}
In order to further analyze the solution around $D$, it is helpful to consider the dynamical system under the change of variables $y \to s$ where $y\Delta(U,H) \frac{d}{dy} =  \frac{d}{ds} $. Hence, $\frac{ds}{dy} = \frac{1}{y[(U(y)-1)^2+H(y)]}$. We shall integrate it near $y=y_D$ with $\frac{dH(U)}{dU}|_{U=U_D}=C_i (U_D,H_D)$ where $i=1,2$. 
By using $\Delta(U(y_D),H(y_D))=0$, we expand $\Delta(U(y),H(y))$ around $y=y_D$ and obtain 
\begin{align}
\Delta(U(y),H(y)) = [2(U_D-1)-C_i(U_D,H_D)]U'(y_D)(y-y_D)+o (y-y_D).
\end{align}
It is a routine matter to check that $2(U_D-1)-C_i(U_D,H_D)>0$ for $i=1,2$. Moreover, since $(U_D,H_D)$ is a removable singularity of $\frac{G(U,H)}{\Delta(U,H)}$ and $|y_D|\in(0,\infty)$, it implies that $U'(y_D)\in(0,\infty)$. Consequently, 
\begin{align}\label{ODE: ds/dy}
	\frac{ds}{dy} = \frac{1}{y[(U(y)-1)^2+H(y)]} = \frac{1}{M(y-y_D)+o(y-y_D)}
\end{align}
where 
\be\label{def: M} 
M = [2(U_D-1)-C_i(U_D,H_D)]U'(y_D)y_D<0. 
\ee
It follows that 
\begin{align}\label{s(y)}
	s(y) = c_0 + \frac{1}{M}\ln|y-y_D| + o(y-y_D), \ \ y\to y_D, 
\end{align}
where $c_0$ is a constant. 
Therefore, $s\to +\infty$ as $y\to y_D$. Now, going back to the original ODE system \eqref{ode system}, after changing variables, we obtain
\begin{align}\label{ODE of s}
	\frac{dH}{ds}   = F(U,H)\quad \text{ and }\quad \frac{dU}{ds} = G(U,H).
\end{align}
We are interested in the behavior of the solution to the ODE system \eqref{ODE of s} around the equilibrium point $(U_D,H_D)$ as $s\to +\infty$. We first linearize the above system \eqref{ODE of s} at $D$, and denote
\begin{align}
	\vec X = \begin{pmatrix} H-H_D\\ U-U_D\end{pmatrix} 
\end{align}
Then, the ODE of $\vec X$ is given by
\begin{align}\label{eq: matrix form D}
		\cfrac{d \vec X}{ds} = \begin{pmatrix} F_H(U_D,H_D)&F_U(U_D,H_D)\\G_H(U_D,H_D)&G_U(U_D,H_D)\end{pmatrix}\vec X + \mathcal O(||\vec X||)=:\mathcal M \vec X+ \mathcal O(||\vec X||).
\end{align}
A direct computation shows that
\begin{align}
	\mathcal M=  \begin{pmatrix} \frac{2(\ga-1)^2}{(3-\ga)^2}&\frac{-(\ga-1)^2[\ga^2-2\ga+5-(3-\ga)^2\mu]}{(3-\ga)^3}\\\frac{4-2(3-\ga)\mu}{(3-\ga)(\ga-1)}&\frac{(\ga+1)(3-\ga)\mu+\ga^2-6\ga+1}{(3-\ga)^2}\end{pmatrix},
\end{align}
the eigenvalues of $\mathcal M$ are
\begin{align}
	\lambda_1 = \frac{2(\ga-1)(\mu-1)}{3-\ga}<0\quad \text{ and }\quad \lambda_2= \frac{(3-\ga)\mu-\ga-1}{3-\ga}<0,
\end{align}
and the corresponding eigenvectors are 
\begin{align}\label{eq: eigenvector D}
	\vec v_1 = \begin{pmatrix}C_2(U_D,H_D)\\ 1\end{pmatrix}\quad \text{ and }\quad
	\vec v_2 = \begin{pmatrix}C_1(U_D,H_D)\\ 1\end{pmatrix}.
\end{align}
It is a routine matter to check that 
\begin{align}
	\lambda_2< \lambda_1<0
\end{align}
for any $\ga\in(1,3)$ and $\mu\in(0,1)$. 
From the Poincare's Theorem and Poincar\'{e}-Dulac Theorem in \cite{Arnold12}, there exits a change of variables:
\begin{align}\label{map X to Y}
	\vec X  = \vec Y + f(\vec Y)
\end{align}
where $f$ is a vector-valued polynomial in $\vec Y = (y_1,y_2)^T$ of order $\geq 2$, and $f(0) = f'(0)=0$, such that the ODE system \eqref{eq: matrix form D} is transferred into
\begin{align}\label{ode of Y}
		\cfrac{d\vec Y}{ds} =  \begin{pmatrix} \lambda_1&0\\0&\lambda_2\end{pmatrix} \vec Y + a \begin{pmatrix} 0 \\  y_1^\frac{\lambda_2}{\lambda_1}\end{pmatrix}
\end{align}
where 
\begin{align*}
	a = \begin{cases} \text{nonzero constant},\ &\text{if} \ \frac{\lambda_2}{\lambda_1} \in \mathbb N,\\
	0, \ &\text{otherwise}. \end{cases}
\end{align*}
When $\frac{\lambda_2}{\lambda_1} \notin \mathbb{N}$, the ODE system \eqref{ode of Y} is a linear system that can be solved directly, and the solution can be mapped from $Y$ back to $X$ using the transformation from $X$ to $Y$. In the case where $\frac{\lambda_2}{\lambda_1} \in \mathbb{N}$, we first solve for $y_1$, and then solve the nonhomogeneous ODE for $y_2$ using the method of integrating factors. 
Hence, all the integral curves converging to $D$ satisfy for $\mathcal H= H-H_D$ and $\mathcal U= U-U_D$
\begin{align}\label{H(s), U(s) at D}
	 \begin{pmatrix}\mathcal H\\\mathcal U\end{pmatrix} = z_1 
	 \vec v_1 e^{\la_1 s} +z_2 \vec v_2 
	 e^{\la_2 s} + \mathcal O(\sum_{\substack{n+m = 2\\ n,m\geq 0}}z_1^nz_2^me^{(n\la_1+m\la_2)s}) \  \text{ for } \begin{pmatrix}\mathcal H\\\mathcal U\end{pmatrix}  \to 0 \text{ as }s\to +\infty 
\end{align}
where $z_1$ and $z_2$ are arbitrary constants with at least one of $z_1$ and $z_2$ being non-zero. It follows that
\begin{align}\label{eq: H'(U) local D}
	\frac{d\mathcal H}{d\mathcal U} \rightarrow 
	 \frac{z_1\la_1C_2e^{\la_1 s}+z_2\la_2C_1e^{\la_2 s}}{z_1\la_1e^{\la_1 s}+z_2\la_2e^{\la_2 s}} = \frac{z_1\la_1C_2+z_2\la_2C_1e^{(\la_2-\la_1)s}}{z_1\la_1+z_2\la_2e^{(\la_2-\la_1)s}} \ \ \text{as }s\to +\infty
\end{align} 
where we recall $C_1, C_2$ in connection to $\vec v_1$ and $\vec v_2$ from \eqref{eq: eigenvector D}. 
We consider three cases:

\underline{Case I: $z_1=0$ and $z_2\neq 0$.} By using \eqref{H(s), U(s) at D},  $\cfrac{\mathcal H(s)}{\mathcal U(s)} \to C_1$ as $s\to +\infty$. Consequently, the integral curve of \eqref{ode system} is along $\vec v_2$.

\underline{Case II: $z_2=0$ and $z_1\neq 0$.} By using \eqref{H(s), U(s) at D}, $\cfrac{\mathcal H(s)}{\mathcal U(s)} \to C_2$ as $s\to +\infty$. Consequently, the integral curve of \eqref{ode system} is along $\vec v_1$.

\underline{Case III: $z_1z_2\neq 0$.} \eqref{eq: H'(U) local D} implies $\cfrac{d\mathcal H}{d\mathcal U} \to C_2$ as $s\to +\infty$. Hence, the integral curves of \eqref{ode system} are along $\vec v_1$.

It then follows that there is only one integral curve of \eqref{ode system} along $\vec v_2$ which is $H(U) = \frac{(\ga-1)^2}{4}U^2$, and all the other integral curves are along $\vec v_1$. 
\end{proof}

For integral curves along $\vec v_1$, we derive the following behavior of $U$ and $H$ with respect to $y$. 

\begin{lemma}\label{Le: asy at D}
	Let $\ga\in(1,3)$ and $\mu\in(0,1)$. The integral curves, which pass through $D$ along $\vec v_1$, satisfy
	\begin{align}\label{Asy: D}
	U(y)&=U_D   -\frac{8+4(\ga-3)\mu}{(3-\ga)(\ga+1)y_D} (y-y_D) + U_\beta (y-y_D)^\beta + \mathcal O(\sum_{\substack{n+m = 2\\ n,m\geq 0}} U_\beta^m (y-y_D)^{n+m\beta}),\\
	H(y)&=H_D +  \frac{2(\ga-1)[(3-\ga)^2\mu-(\ga^2-2\ga+5)]}{(3-\ga)^2(\ga+1)y_D} (y-y_D) + H_\beta (y-y_D)^\beta + \mathcal O(\sum_{\substack{n+m = 2\\ n,m\geq 0}} H_\beta^m (y-y_D)^{n+m\beta})\notag 
\end{align}
where $\beta = \frac{\la_2}{\la_1}>1$, and $H_\beta$ and $U_\beta$ are arbitrary constants.
\end{lemma}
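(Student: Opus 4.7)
The plan is to translate the Poincar\'{e}-Dulac normal form expansion \eqref{H(s), U(s) at D}, already established in the $s$-variable, back to the original similarity variable $y$ via the asymptotic relation \eqref{s(y)}. For integral curves passing through $D$ along $\vec v_1$ (Cases II and III in the proof of Lemma \ref{Le: integral curves at D}), the leading contribution to $\mathcal U(s) = U(s) - U_D$ is $z_1 e^{\lambda_1 s}$ as $s \to +\infty$. On the other hand, the ODE $y \frac{dU}{dy} = G/\Delta$ has a removable singularity at $y = y_D$, so $U$ is at least Lipschitz in $y$ near $y_D$, giving $\mathcal U(y) = U'(y_D)(y - y_D) + o(y - y_D)$. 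Matching these two expansions using $y - y_D \sim e^{-Mc_0} e^{Ms}$ from \eqref{s(y)} forces
\[
M = \lambda_1 \quad \text{and} \quad U'(y_D) \neq 0,
\]
which also pins down the constant $M$ defined in \eqref{def: M} (with $C_i = C_2(U_D,H_D)$) along the $\vec v_1$ branch.

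Once $M = \lambda_1$ is fixed, the normal-form exponentials translate cleanly into powers of $(y - y_D)$: one has $e^{\lambda_1 s} = c_1 (y - y_D)(1 + o(1))$ and $e^{\lambda_2 s} = c_2 (y - y_D)^\beta (1 + o(1))$ with $\beta = \lambda_2/\lambda_1 > 1$ (both eigenvalues are negative with $\lambda_2 < \lambda_1$), and more generally $e^{(n\lambda_1 + m\lambda_2)s} = \mathcal O((y - y_D)^{n + m\beta})$. Substituting into \eqref{H(s), U(s) at D} and absorbing the harmless constants $c_1, c_2$ into redefinitions of $z_1, z_2$ produces the claimed expansion \eqref{Asy: D}, with $U_\beta$ and $H_\beta$ proportional to $z_2$ (hence free parameters, as stated) and with the error $\mathcal O\bigl(\sum_{n+m=2} U_\beta^m (y - y_D)^{n + m\beta}\bigr)$ inherited directly from the error estimate in \eqref{H(s), U(s) at D}.

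It remains to identify the explicit linear coefficients. Applying L'H\^{o}pital's rule to $y \frac{dU}{dy} = G/\Delta$ at the removable singularity $y = y_D$ along the curve on which $dH/dU \to C_2(U_D, H_D)$ yields
\[
y_D\, U'(y_D) = \frac{G_U + G_H\, C_2(U_D,H_D)}{\Delta_U + \Delta_H\, C_2(U_D,H_D)}\bigg|_{(U_D,H_D)}.
\]
Substituting the formulas for $G_U, G_H, \Delta_U = 2(U-1), \Delta_H = -1$ at $D = \bigl(\tfrac{2}{3-\gamma}, (\tfrac{\gamma-1}{3-\gamma})^2\bigr)$ together with the expression for $C_2(U_D, H_D)$ from Lemma \ref{der at D}, and simplifying (the factor $2 - (3-\gamma)\mu$ cancels neatly), produces the stated coefficient $-\tfrac{8 + 4(\gamma - 3)\mu}{(3-\gamma)(\gamma+1)y_D}$. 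The coefficient of $(y - y_D)$ in $H$ is then $H'(y_D) = C_2(U_D, H_D) \, U'(y_D)$, and the same cancellation yields exactly the stated value.

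The main obstacle is the resonance case in Poincar\'{e}-Dulac: when $\beta = \lambda_2/\lambda_1 \in \mathbb{N}$, the nonhomogeneous monomial in \eqref{ode of Y} threatens to introduce a logarithmic correction. However, as already encoded in the derivation of \eqref{H(s), U(s) at D}, this extra term is itself of the form $y_1^\beta$ and so contributes only at the same order $(y - y_D)^\beta$ already present in the expansion; no genuine logarithmic term appears. Once this is observed, the translation from $s$ to $y$ is a routine bookkeeping argument.
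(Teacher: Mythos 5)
Your proposal follows essentially the same route as the paper's proof: translate the normal-form expansion \eqref{H(s), U(s) at D} back to the $y$-variable via \eqref{s(y)}, and pin down the linear coefficients by evaluating $G/\Delta$ at the removable singularity along the branch with $dH/dU\to C_2(U_D,H_D)$ (your L'H\^{o}pital formula with $\Delta_U=2(U-1)$, $\Delta_H=-1$ is exactly the paper's computation of $U'(y_D)$ and $H'(y_D)$). The one structural difference is how $M=\lambda_1$ is obtained: the paper asserts it by direct computation from \eqref{def: M}, whereas you derive it by matching $z_1 e^{\lambda_1 s}$ against the Lipschitz behavior $U'(y_D)(y-y_D)$ with $U'(y_D)\neq 0$. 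That matching argument is legitimate and somewhat more conceptual, and the nonvanishing of $U'(y_D)$ it requires is supplied by your own coefficient computation since $8-4(3-\gamma)\mu>0$.

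One caveat: your dismissal of the resonant case $\beta=\lambda_2/\lambda_1\in\mathbb N$ is not correct as stated. The forcing $a\,y_1^{\beta}=a z_1^{\beta}e^{\beta\lambda_1 s}=a z_1^{\beta}e^{\lambda_2 s}$ in the $y_2$-equation of \eqref{ode of Y} is exactly resonant with the homogeneous mode $e^{\lambda_2 s}$, so the method of integrating factors produces a particular solution proportional to $s\,e^{\lambda_2 s}$, i.e.\ a $(y-y_D)^{\beta}\ln|y-y_D|$ correction --- not merely another contribution at order $(y-y_D)^{\beta}$ as you claim. This does not affect the linear coefficients, which are the substantive content of the lemma, and the paper's own expansion \eqref{H(s), U(s) at D} omits the same term, so your argument is no less rigorous than the paper's on this point; but the reason you give for the absence of a logarithm is wrong, and a careful treatment would either restrict to the nonresonant parameters or carry the logarithmic factor at order $\beta$.
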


\begin{proof}
For any integral curves, which pass through $D$ along $\vec v_1$, satisfy $H'(U_D)=C_2(U_D,H_D)$. By using this identity and $y_D$ is finite, we have
\begin{align*}
	\lim_{y\to y_D^-} U'(y) &= \lim_{y\to y_D^-} \frac{1}{y_D}\frac{G_H(U,H)H'(U)U'(y)+G_U(U,H)U'(y)}{2(U-1)U'(y)-H'(U)U'(y)}\\
	& = \lim_{y\to y_D^-}  \frac{1}{y_D}\frac{G_H(U,H)H'(U)+G_U(U,H)}{2(U-1)-H'(U)}\\
	&=\frac{1}{y_D}\frac{G_H(U_D,H_D)C_2(U_D,H_D)+G_U(U_D,H_D)}{2(U_D-1)-C_2(U_D,H_D)}\\
	&=-\frac{8+4(\ga-3)\mu}{(3-\ga)(\ga+1)y_D}.
\end{align*}
This leads to 
\begin{align}
	U'(y_D) &= -\frac{8+4(\ga-3)\mu}{(3-\ga)(\ga+1)y_D},\label{U'(yD)}\\
	H'(y_D)&=  \frac{2(\ga-1)[(3-\ga)^2\mu-(\ga^2-2\ga+5)]}{(3-\ga)^2(\ga+1)y_D}.\label{H'(yD)}
\end{align}
Recalling \eqref{def: M}, \eqref{s(y)} and \eqref{H(s), U(s) at D}, we then obtain
\begin{align*}
	\la_1 s &= \la_1 c_0 + \frac{\la_1}{M}\ln|y-y_D| + o(y-y_D) = \la_1 c_0 + \ln|y-y_D| + o(y-y_D), \\
	\la_2 s &= \la_2 c_0 + \frac{\la_2}{M}\ln|y-y_D| + o(y-y_D) =\la_2 c_0+\beta \ln|y-y_D| + o(y-y_D), 
\end{align*}
where we have used $\la_1= M$, which follows from direct computation. Hence, 
\begin{align}
	U(y)&=U_D + K_1 (y-y_D) + U_\beta (y-y_D)^\beta + \mathcal O(\sum_{\substack{n+m = 2\\ n,m\geq 0}} K_1^nU_\beta^m (y-y_D)^{n+m\beta}),\label{U(y) at D}\\
	H(y)&=H_D + K_2 (y-y_D) + H_\beta (y-y_D)^\beta + \mathcal O(\sum_{\substack{n+m = 2\\ n,m\geq 0}} K_2^nH_\beta^m (y-y_D)^{n+m\beta}), \label{H(y) at D}
\end{align}
where $K_1$ and $K_2$ are constants 
to be determined. 
As observed, the remainders in both \eqref{U(y) at D} and \eqref{H(y) at D} have the lowest power of 2. Consequently, $K_1$ and $K_2$ must conform to \eqref{U'(yD)} and \eqref{H'(yD)} respectively. It completes the proof. 
\end{proof}
Combining Lemmas \ref{der at D}--\ref{Le: asy at D}, we conclude that
\begin{lemma}\label{Le: asy at D both sides}
	Let $\ga\in(1,3)$ and $\mu\in(0,1)$. The solution curve passing through $D$ satisfies
\begin{align*}
	&U(y) = U_D  -\frac{\ga+1-(3-\ga)\mu}{(\ga-1)(3-\ga)y_D}(y-y_D)+\sum_{n\geq 2}U_n (y-y_D)^n,  \text{ for }y-y_D>0,\\
	&U(y) = U_D-\frac{8+4(\ga-3)\mu}{(3-\ga)(\ga+1)y_D} (y-y_D) + U_\beta (y-y_D)^\beta + \mathcal O(\sum_{\substack{n+m = 2\\ n,m\geq 0}} U_\beta^m (y-y_D)^{n+m\beta}),  \text{ for }y-y_D<0, 
\end{align*}
where $\beta = \frac{\la_2}{\la_1}>1$, and $U_\beta$ is to be determined by Lemma \ref{B to D}.
\end{lemma}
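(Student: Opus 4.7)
The plan is to split into the two sides $y>y_D$ and $y<y_D$. By Lemma \ref{der at D} there are exactly two tangent directions at $D$, with slopes $C_1(U_D,H_D)$ and $C_2(U_D,H_D)$, and by Lemma \ref{Le: integral curves at D} the only integral curve with slope $C_1$ is the special solution $H^{\mathrm{sp}}(U)=\frac{(\ga-1)^2}{4}U^2$, while every other integral curve through $D$ approaches along $\vec v_1$ with slope $C_2$. The right side $y>y_D$ will realize the special branch (the continuation of the curve constructed from $E$ in Lemma \ref{lemma 3.7}), and the left side $y<y_D$ will realize the generic $C_2$-direction, which is what we will subsequently continue toward $B$.

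For $y>y_D$, the trajectory is prescribed by $H^{\mathrm{sp}}$, so the ODE system \eqref{ode system} collapses to the scalar equation \eqref{ode B}. At $U=U_D=\frac{2}{3-\ga}$ the denominator $(\ga+1)U-2$ evaluates to $\frac{4(\ga-1)}{3-\ga}\neq 0$, so \eqref{ode B} is regular at $(y_D,U_D)$ and admits a unique analytic solution there. This yields a convergent Taylor expansion $U(y)=U_D+\sum_{n\geq 1}U_n(y-y_D)^n$, and direct evaluation of \eqref{ode B} at $(y_D,U_D)$ gives
\be
U_1=\frac{1}{y_D}\Big(-\frac{[(\ga+1)U_D-2\mu]U_D}{(\ga+1)U_D-2}\Big)=-\frac{\ga+1-(3-\ga)\mu}{(\ga-1)(3-\ga)y_D},
\ee
matching the stated leading coefficient; the higher-order $U_n$ for $n\geq 2$ are then determined by the usual recursion from substituting the series back into \eqref{ode B}.

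For $y<y_D$, the trajectory approaching $D$ from the left cannot coincide with the special solution (which continues smoothly to the right from $E$), so by Lemma \ref{Le: integral curves at D} it must approach $D$ along $\vec v_1$ with slope $C_2$. The expansion then follows verbatim from Lemma \ref{Le: asy at D}: one reads off the $U$-component and identifies the linear coefficient with $-\frac{8+4(\ga-3)\mu}{(3-\ga)(\ga+1)y_D}$, together with the subleading term $U_\beta(y-y_D)^\beta$ and the mixed remainders. The constant $U_\beta$ is intrinsically free at this stage, parametrizing the one-parameter family of $\vec v_1$-trajectories through $D$ (the parameter $z_1$ in \eqref{H(s), U(s) at D}), and will be fixed uniquely in Lemma \ref{B to D} by the barrier argument forcing the trajectory to reach the saddle $B$ representing the vacuum boundary.

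The calculation is short once the preceding lemmas are in place; there is no real analytic obstacle, only a bookkeeping point worth flagging. The qualitative contrast between the two sides — an integer-power Taylor series on the right versus a fractional-power expansion with exponent $\beta=\la_2/\la_1>1$ (generically non-integer) on the left — together with the inequality $C_1\neq C_2$ at the linear level, means that the two one-sided expansions agree only at order zero. This produces a Lipschitz but not $C^1$ match across $y=y_D$, which is precisely the weak discontinuity along the sonic curve announced in Theorem \ref{main theorem}.
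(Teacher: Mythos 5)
Your proof is correct and follows essentially the paper's own route: the paper derives this lemma simply by combining Lemmas \ref{der at D}--\ref{Le: asy at D}, with the right-hand expansion coming from the analytic special solution (your explicit evaluation of \eqref{ode B} at $(y_D,U_D)$ correctly reproduces the stated coefficient $-\frac{\ga+1-(3-\ga)\mu}{(\ga-1)(3-\ga)y_D}$) and the left-hand expansion read off verbatim from Lemma \ref{Le: asy at D}. The one quibble is your parenthetical reason for discarding the special branch on the left of $D$ --- the special solution does extend to $y<y_D$ as an ODE solution; it is rejected because that continuation returns to $A$ and never reaches the vacuum point $B$, which is the selection argument the paper gives at the start of Section \ref{Sec 4} and which you in fact also invoke correctly in your opening paragraph.
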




We now present local analysis at the point $B$ corresponding to the vacuum boundary.  

\begin{lemma}\label{der at B}
	For any $\gamma \in (1,3)$ and $\mu \in (0,1)$, there exist two distinct branches at the point $B$: $C_1(U_B, H_B) = 0$ and $C_2(U_B, H_B) = \frac{\gamma(1-\mu)}{1+k_2}$. Moreover, there exists exactly one integral curve passing through $B$ along each of these branches, and both integral curves are smooth at $B$, and 
	the integral curve corresponding to $C_1(U_B, H_B) = 0$ is precisely $H\equiv 0$.  
\end{lemma}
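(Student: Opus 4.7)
The plan is to mirror the analysis carried out at $D$ in Lemmas~\ref{der at D}--\ref{Le: integral curves at D}, the key difference being that $B$ is a hyperbolic \emph{saddle} of the desingularised flow rather than a node, so that both branches are automatically selected by the stable/unstable manifold theorem.

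First I compute the two admissible slopes at $B=(1,0)$ by the same L'Hospital-type formula used at $C$ and $D$. A direct evaluation of the partial derivatives gives $F_H(B)=(\gamma-1)(1-\mu)$, $F_U(B)=0$, $G_H(B)=1+k_2>0$ and $G_U(B)=\mu-1$. Because $F_U(B)=0$, the radicand in the slope formula collapses to $(F_H-G_U)^2=[\gamma(1-\mu)]^2$, so the two branches are precisely
\[
C_1(U_B,H_B)=0,\qquad C_2(U_B,H_B)=\frac{F_H(B)-G_U(B)}{G_H(B)}=\frac{\gamma(1-\mu)}{1+k_2}.
\]
The $C_1$-branch is identified immediately: since $F=2H[H-(U^2-k_1U+\mu)]$ has $H$ as a factor, the line $\{H\equiv 0\}$ is invariant under \eqref{ode system}, so $H\equiv 0$ is a smooth integral curve through $B$ with slope $0=C_1$.

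For the $C_2$-branch and for uniqueness along both branches, I rescale $y\to s$ by $d/ds=y\Delta\,d/dy$ exactly as in Lemma~\ref{Le: integral curves at D}, turning $B$ into a genuine fixed point of the autonomous system \eqref{ODE of s}. Its linearisation
\[
\mathcal M_B=\begin{pmatrix}(\gamma-1)(1-\mu) & 0\\ 1+k_2 & \mu-1\end{pmatrix}
\]
is lower triangular with eigenvalues $\lambda_1=(\gamma-1)(1-\mu)>0$ and $\lambda_2=\mu-1<0$, and corresponding eigenvectors $\vec v_1=(C_2,1)^T$, $\vec v_2=(0,1)^T$. Since $\det\mathcal M_B<0$, $B$ is a hyperbolic saddle for the $s$-flow. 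The analytic stable/unstable manifold theorem then produces a unique smooth local invariant curve tangent to each eigendirection: a stable manifold along $\vec v_2$ (reached as $s\to+\infty$) and an unstable manifold along $\vec v_1$ (reached as $s\to-\infty$). By uniqueness of the stable manifold as a graph $H=\varphi(U)$ with $\varphi'(1)=0$, it must coincide near $B$ with the already known invariant curve $\{H\equiv 0\}$; the unstable manifold then supplies the unique integral curve through $B$ with slope $C_2$, completing existence and uniqueness on both branches.

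The main technical obstacle is the possible Poincar\'e--Dulac resonance coming from $\lambda_2/\lambda_1=-1/(\gamma-1)$, which may fail non-resonance at exceptional values of $\gamma$. This is handled exactly as in Lemma~\ref{Le: integral curves at D} by invoking the Poincar\'e--Dulac normal form of \eqref{ODE of s} near $B$ and translating back to the $y$-variable via \eqref{s(y)}; since the invariant manifolds of a hyperbolic fixed point of an analytic vector field are smooth irrespective of resonance, both integral curves are smooth at $B$.
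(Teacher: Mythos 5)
Your proposal is correct and follows essentially the same route as the paper: both desingularize via $d/ds = y\Delta\, d/dy$, compute the linearization at $B$ with eigenvalues $\mu-1<0<(\gamma-1)(1-\mu)$, conclude $B$ is a hyperbolic saddle so that exactly one smooth integral curve passes through $B$ in each eigendirection, and identify the $C_1$-branch with the invariant line $H\equiv 0$ by uniqueness. Your version merely supplies more detail than the paper (the explicit slope computation, eigenvectors, and the remark that smoothness of the stable/unstable manifolds does not require non-resonance), all of which checks out.
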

\begin{proof}
	As in Lemma \ref{Le: integral curves at D}, we linearize the system around $B$. The eigenvalues of the matrix corresponding to the linearized system are	\[
	\lambda_1 = \mu-1<0<\lambda_2 = (\ga-1)(1-\mu).
	\]
	Therefore, by the classical theory of local analysis for ODE systems, $B$ is a saddle point. Consequently, there exists exactly one integral curve passing through $B$ in each direction.  The claim regarding the curve corresponding to $C_1(U_B, H_B) = 0$ follows directly from the uniqueness of the solution.  
\end{proof}

We use $U_G(H)$ or $H_G(U)$ to denote the curve described by $G=0$. 
\begin{lemma}\label{U_G'(H)>0}
Let $\ga\in(1,3)$ and $\mu\in(0,1)$. For $U\geq 1$,  $H_G(U)$ satisfies $0<H_G'(U)<\infty$.
\end{lemma}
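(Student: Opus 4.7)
The starting point is to solve $G(H,U,\gamma,\mu)=H(U+k_2)-U(U-1)(U-\mu)=0$ explicitly for $H$. Since $\mu\in(0,1)$ and $\gamma>1$ force $k_2=\frac{2(1-\mu)}{\gamma-1}>0$, the quantity $U+k_2$ is strictly positive on $U\ge 1$, so the curve $G=0$ can be written as the graph
\[
H_G(U)=\frac{U(U-1)(U-\mu)}{U+k_2},\qquad U\ge 1.
\]
This already gives finiteness: $H_G$ is a rational function whose denominator does not vanish on $[1,\infty)$, so $H_G'(U)$ is finite there. It also gives $H_G(U)\ge 0$ on $U\ge 1$, with equality only at $U=1$ (recall $\mu\in(0,1)$, so $U-\mu>0$).

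For the positivity $H_G'(U)>0$ I plan to split into the two cases $U>1$ and $U=1$. On $U>1$ every factor $U$, $U-1$, $U-\mu$, $U+k_2$ is strictly positive, so the logarithmic derivative is well-defined:
\[
\frac{H_G'(U)}{H_G(U)}=\frac{1}{U}+\frac{1}{U-1}+\frac{1}{U-\mu}-\frac{1}{U+k_2}.
\]
Since $k_2>0>-\mu$ we have $U+k_2>U-\mu$, hence $\frac{1}{U-\mu}>\frac{1}{U+k_2}$, and the remaining two terms $\tfrac{1}{U}$ and $\tfrac{1}{U-1}$ are strictly positive. Thus the right-hand side is positive, and multiplying by $H_G(U)>0$ yields $H_G'(U)>0$ for $U>1$. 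At the endpoint $U=1$, the logarithmic-derivative trick breaks down because $H_G(1)=0$, so I will instead differentiate $H_G$ directly: a one-line computation using $(U-1)|_{U=1}=0$ gives
\[
H_G'(1)=\frac{1-\mu}{1+k_2}>0,
\]
closing the gap.

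Alternatively, to avoid treating $U=1$ separately, one can work with implicit differentiation. Since $G_H=U+k_2>0$ on $U\ge 1$, we have $H_G'(U)=-G_U/G_H$; substituting $H=H_G(U)$ into $G_U=H-3U^2+2(1+\mu)U-\mu$ reduces the question to checking that the single polynomial
\[
Q(U):=(U+k_2)\,G_U|_{H=H_G(U)}=-2U^3+(1+\mu-3k_2)U^2+2(1+\mu)k_2 U-\mu k_2
\]
is strictly negative on $U\ge 1$. Expanding in the shifted variable $V=U-1\ge 0$ I expect each coefficient of $V^j$ (for $j=0,1,2,3$) in $-Q(1+V)$ to factor nicely through $(1-\mu)$ or $(2-\mu)$ and $(1+k_2)$ and thus be manifestly positive, giving $-Q(1+V)>0$ for all $V\ge 0$. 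Either route finishes the lemma; the logarithmic-derivative approach is the cleanest and is the one I would present, with the direct $U=1$ computation to handle the removable endpoint.
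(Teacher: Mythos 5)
Your proposal is correct and follows essentially the same route as the paper: write $H_G(U)=\frac{U(U-1)(U-\mu)}{U+k_2}$ explicitly, note the denominator is bounded away from zero since $k_2=\frac{2(1-\mu)}{\gamma-1}>0$, and verify the sign of the derivative. The paper simply computes $H_G'(U)$ as a single rational expression and asserts its positivity on $U\ge 1$, whereas your logarithmic-derivative organization (with the separate check $H_G'(1)=\frac{1-\mu}{1+k_2}>0$, which matches the value the paper uses later in Lemma \ref{B to D}) makes the sign verification more transparent; the substance is the same.
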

\begin{proof}
	Since $H_G(U) = \frac{U(U-1)(U-\mu)}{U+k_2}$, recalling \eqref{k2},
	\begin{align*}
		H_G'(U) = \frac{(2U-1-\mu)U^2+k_2(3U-2(1+\mu)U+\mu k_2)}{(U+k_2)^2}\in(0,\infty)
	\end{align*}
	for any $U\geq 1$.
\end{proof}
We now prove that the integral curve from $B$ with $\frac{d H(U)}{dU} \big|_{U=U_B} = \frac{\gamma(1-\mu)}{1+k_2}$ leads to the  connection to $D$.
\begin{lemma}\label{B to D}
For any $\ga\in(1,3)$ and $\mu\in(0,1)$, there exists a unique integral curve, which connects $B$ to $D$ with the slope $C_2(U_B,H_B)>0$  at the point  $B$. Moreover, the asymptotic behavior of the integral curve satisfies \eqref{Asy: D} at the point $D$.
\end{lemma}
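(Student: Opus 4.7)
The argument is a standard phase--plane/barrier argument combined with a Poincar\'e--Bendixson-type conclusion, carried out in the desingularised flow $\tfrac{dU}{ds}=G(U,H)$, $\tfrac{dH}{ds}=F(U,H)$ (cf.\ the proof of Lemma~\ref{Le: integral curves at D}).

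\emph{Step 1 (unstable manifold at $B$).} By Lemma~\ref{der at B}, $B=(1,0)$ is a hyperbolic saddle whose unstable eigenvalue $\lambda_2=(\gamma-1)(1-\mu)>0$ corresponds to the direction of slope $C_2(U_B,H_B)=\frac{\gamma(1-\mu)}{1+k_2}$ in the $(U,H)$-plane. The stable/unstable manifold theorem produces a unique smooth integral curve $H=H_*(U)$ leaving $B$ along this branch; what remains is to show $H_*$ reaches $D$ with the asymptotics~\eqref{Asy: D}.

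\emph{Step 2 (trapping region).} I would trap $H_*$ in
\begin{equation*}
\mathcal{R}^{+}=\big\{(U,H):1<U<U_{D},\ H_{G}(U)<H<H^{\text{sp}}(U)\big\},
\end{equation*}
where $H_{G}(U)=\tfrac{U(U-1)(U-\mu)}{U+k_{2}}$ is the locus $G=0$ (cf.\ Lemma~\ref{U_G'(H)>0}) and $H^{\text{sp}}(U)=\tfrac{(\gamma-1)^{2}}{4}U^{2}$ is the special integral curve of Lemma~\ref{L: special solution}. The decisive algebraic inputs on $U\in(1,U_{D})\subset(U_{C},U_{D})$ are the identities
\begin{align*}
H^{\text{sp}}(U)-H_{G}(U)&=\tfrac{U}{U+k_{2}}\Big(\tfrac{(\gamma-1)^{2}}{4}-1\Big)(U-U_{C})(U-U_{D}),\\
H_{G}(U)-H_{F}(U)&=\tfrac{k_{1}-k_{2}-1-\mu}{U+k_{2}}(U-U_{C})(U-U_{D}),
\end{align*}
with $H_{F}(U)=U^{2}-k_{1}U+\mu$. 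Both differences are quadratics that must vanish at $U_{C}$ and $U_{D}$ (since both $C$ and $D$ satisfy $F=G=0$ and both lie on $H^{\text{sp}}$), and both leading coefficients, $\tfrac{(\gamma-1)^{2}}{4}-1$ and $k_{1}-k_{2}-1-\mu=(1-\mu)\tfrac{(\gamma-3)(\gamma+1)}{2(\gamma-1)}$, are strictly negative for $\gamma\in(1,3)$ and $\mu\in(0,1)$; consequently both differences are strictly positive on $(1,U_{D})$. A direct computation gives $H_{G}'(1)=\tfrac{1-\mu}{1+k_{2}}<\tfrac{\gamma(1-\mu)}{1+k_{2}}=C_{2}(U_{B},H_{B})$, so $H_{*}$ enters the interior of $\mathcal{R}^{+}$ the instant it leaves $B$.

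\emph{Step 3 (invariance and convergence to $D$).} Inside $\mathcal{R}^{+}$ one has $G>0$ and, using $H>H_{G}>H_{F}$, also $F=2H(H-H_{F})>0$; the phase-plane field points strictly up-and-right. The curve $H^{\text{sp}}$ is itself an integral curve so $H_{*}$ cannot cross it by uniqueness at regular points. For the lower barrier, set $\phi(s):=H(s)-H_{G}(U(s))$; then $\phi'=F-H_{G}'(U)G$, which on $\{\phi=0\}\cap\{1<U<U_{D}\}$ reduces to $2H_{G}(H_{G}-H_{F})>0$. Thus $\phi$ is strictly increasing whenever it vanishes, so $\phi>0$ is preserved and $H_{*}$ cannot exit downward through $H_{G}$. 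The side $U=1$ is swept away from by $\dot U>0$, and the side $U=U_{D}$ meets $\overline{\mathcal{R}^{+}}$ only at $D$. Hence $H_{*}$ stays in $\mathcal{R}^{+}$ with $U$ strictly monotone in $s$; its $\omega$-limit is a non-empty, compact, invariant subset of $\overline{\mathcal{R}^{+}}$, and monotonicity of $U$ excludes periodic orbits and returns to $B$. Poincar\'e--Bendixson then forces $H_{*}\to D$ as $s\to+\infty$.

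\emph{Step 4 (asymptotics and uniqueness).} Since $H_{*}$ passes through $B$ while $H^{\text{sp}}$ does not, $H_{*}\not\equiv H^{\text{sp}}$; Lemma~\ref{Le: integral curves at D} then forces $H_{*}$ to approach $D$ along $\vec v_{1}$ with slope $C_{2}(U_{D},H_{D})$, and Lemma~\ref{Le: asy at D} yields~\eqref{Asy: D}. Uniqueness of the connection is built into the one-dimensionality of the unstable manifold at the saddle $B$.

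\emph{Main obstacle.} The routine part is Poincar\'e--Bendixson; the genuine difficulty is guessing the right barriers and verifying that the quadratic differences $H^{\text{sp}}-H_{G}$ and $H_{G}-H_{F}$ factor through the common roots $U_{C},U_{D}$ with the signs needed to make $\mathcal{R}^{+}$ simultaneously non-degenerate, forward-invariant, and capturing the unstable manifold from $B$. Once those two factorisations are in hand, the remainder of the argument is standard.
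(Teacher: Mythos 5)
Your proof is correct and follows essentially the same route as the paper: the unique branch leaving $B$ with slope $C_2(U_B,H_B)$ is trapped between the lower barrier $H_G$ (using $H_G'(1)<C_2(U_B,H_B)$ and the fact that the field cannot cross $\{G=0\}$ downward) and the upper barrier $H^{\text{sp}}$, forcing it to reach $D$, with the asymptotics then supplied by Lemma \ref{Le: integral curves at D}. Your explicit factorizations of $H^{\text{sp}}-H_G$ and $H_G-H_F$ through the roots $U_C,U_D$, and the transversality computation $\phi'=2H_G(H_G-H_F)>0$ on $\{\phi=0\}$, are a slightly more quantitative packaging of the paper's contradiction argument (where the slope $H'(U)$ would blow up at a first touching point while $H_G'$ stays finite), but the substance is the same.
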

\begin{proof}
The uniqueness follows from Lemma \ref{der at B}, and the second part directly follows from the first part. Hence, we only need to show there exists an integral curve which connects from $B$ to $D$. The special solution $H^{\text{sp}}(U) = \frac{(\ga-1)^2}{4}U^2$ is a natural upper barrier. We claim $H_G(U)$ is a lower barrier for the integral curves from $B$ to $D$. We will prove it by contradiction.
We first show that $H_G(U)$ is a lower barrier for $U\in(1,1+\epsilon)$ where $\epsilon$ is sufficiently small. We compute 
\begin{align}\notag
	H_G'(U_B,H_B) = \frac{1-\mu}{1+k_2}<C_2(U_B,H_B).
\end{align}
It follows that the integral curve $H(U)$ starting at $B$ with the slope $C_2(U_B,H_B)$ and propagated by \eqref{ODE} to the right, lies above $H_G(U)$ for $U\in(1,1+\epsilon)$. Suppose there exists a $U_r\in(1,U_D)$ such that $H(U)$ first intersects $H_G(U)$ at $U=U_r$. Then, we have $H(U)>H_G(U)$ for any $U\in(1,U_r)$ and $\lim_{U\to U_r} H'(U) = +\infty$. On the other hand, we have $H_G'(U)\in(0,\infty)$ by Lemma \ref{U_G'(H)>0}, which is a contradiction. Hence, the integral curve $H(U)$ starting at $B$ satisfies $ H_G(U )< H(U) < H^{\text{sp}}(U)$ for $1<U< U_D$, and $H(U_D) = H_D$. 
\end{proof}

We conclude this section by establishing the properties of the solution curve near the point $B$ in terms of $y$.  
 Let $H^\text{BD}(y)$ and $U^\text{BD}(y)$ be corresponding to 
 the integral curves constructed in Lemma \ref{B to D}, and let $y_B$ be the corresponding $y$-value at the point $B$. 
\begin{lemma}\label{Asy at B}
	For any $\gamma \in (1,3)$ and $\mu \in (0,1)$, $y_B$ is a finite negative number. Moreover, for $y$ such that $0<y-y_B\ll 1$, the functions $H^\text{BD}(y)$ and $U^\text{BD}(y)$ satisfy
	\begin{equation}\label{Eq: Asy at B}
	\begin{aligned}
		U^\text{BD}(y) &= 1+ \frac{(\ga-1)(\mu-1)}{y_B} (y-y_B) + \sum_{n\geq 2}\tilde U_n (y-y_B)^n, \\
		H^\text{BD}(y) &= \frac{(1-\ga)(1+k_2)}{\ga y_B}(y-y_B) + \sum_{n\geq 2}\tilde H_n (y-y_B)^n.
	\end{aligned}
	\end{equation}
\end{lemma}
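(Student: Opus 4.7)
The plan is to apply, at the saddle point $B=(1,0)$, the same desingularization strategy used at $D$ in Lemma \ref{Le: integral curves at D}, and then exploit the one-dimensionality of the unstable manifold to get an analytic (integer-power) expansion in $(y-y_B)$.

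\textbf{Step 1.} I would reparametrize via $\frac{d}{ds}=y\Delta\frac{d}{dy}$, turning the singular system \eqref{ode system} into the regular autonomous system $\frac{dH}{ds}=F(U,H)$, $\frac{dU}{ds}=G(U,H)$, coupled to $\frac{dy}{ds}=y\Delta$. Direct computation shows that the Jacobian of $(F,G)$ at $B$ is upper-triangular with diagonal entries $(\ga-1)(1-\mu)$ and $\mu-1$; these are the eigenvalues $\la_2>0>\la_1$ already obtained in Lemma \ref{der at B}, so $B$ is a hyperbolic saddle, and the non-zero-slope branch of Lemma \ref{B to D} is exactly the analytic unstable manifold $W^u(B)$. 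By the analytic unstable manifold theorem (no self-resonance is possible for a single eigenvalue), $W^u(B)$ admits an analytic parametrization $U-1 = c\,z+\mathcal O(z^2)$, $H=\tfrac{c\ga(1-\mu)}{1+k_2}\,z+\mathcal O(z^2)$ in the variable $z=e^{\la_2 s}$, with $c>0$ fixed so that the branch lies in the physical region $0<H<H^{\text{sp}}(U)$, $U>1$ identified in Lemma \ref{B to D}.

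\textbf{Step 2.} Using $\Delta=(U-1)^2-H = -\tfrac{c\ga(1-\mu)}{1+k_2}\,z+\mathcal O(z^2)$ along $W^u(B)$, I would change the time in the $y$-equation from $s$ to $z$ via $ds = dz/(\la_2 z)$. The factor $z$ cancels and one obtains the regular ODE
\begin{equation*}
\frac{dy}{dz} = \frac{y\,\Delta(z)}{\la_2\, z} = -\frac{c\ga}{(\ga-1)(1+k_2)}\,y + \mathcal O(z\,y),
\end{equation*}
whose Picard solution is analytic at $z=0$. Its value there is finite, and is negative since the trajectory lives in the half-line $y<0$ (corresponding to $t>0$, $x<0$, the moving-boundary side); call it $y_B$. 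The linear coefficient yields $y - y_B = -\tfrac{c\ga\,y_B}{(\ga-1)(1+k_2)}\,z + \mathcal O(z^2)$, a locally invertible analytic relation. Substituting the resulting analytic $z=z(y-y_B)$ back into the series for $U-1$ and $H$ produces the claimed convergent Taylor expansions in integer powers of $(y-y_B)$; the linear terms reproduce \eqref{Eq: Asy at B}, and the coefficients $\tilde U_n,\tilde H_n$ are obtained recursively by plugging the Taylor ansatz into \eqref{ode system} and equating coefficients.

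\textbf{Main obstacle.} The apparent difficulty is the degeneracy $\Delta(B)=0$, which a priori makes $\frac{dy}{ds}=y\Delta$ singular and leaves open whether $y(s)$ has a finite limit as $s\to-\infty$. The key observation rescuing the argument is that along $W^u(B)$ the dominant term of $\Delta$ is the linear contribution from $-H$, not the quadratic $(U-1)^2$, so $\Delta$ vanishes exactly like $z=e^{\la_2 s}$; the rescaling $s\to z$ then renders everything regular and Picard-solvable. This is also what explains why, in contrast with Lemma \ref{Le: asy at D} at $D$, only integer powers of $(y-y_B)$ appear in the expansion near $B$: on a one-dimensional unstable manifold only the single eigenvalue $\la_2$ is active, and no fractional exponents $\beta=\la_2/\la_1$ can arise.
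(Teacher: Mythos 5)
Your proof is correct, but it reaches the conclusion by a genuinely different route from the paper's. The paper works directly in the variable $z=\ln(-y)$: it computes the limits of $F/\Delta$ and $G/\Delta$ along the $C_2$-branch as $U\to U_B$ (so that $z_B$, like $z_D$, is a removable singularity of $\mathcal G=G/\Delta$), deduces that $\mathcal G$ is pinched between two negative constants on the compact arc from $B$ to $D$, and integrates $dz/dU=1/\mathcal G$ over $[U_B,U_D]$ to get finiteness of $z_D-z_B$ and hence of $y_B$; the integer-power expansion is then imported wholesale from the smoothness assertion of Lemma \ref{der at B}. You instead desingularize with the $s$-time exactly as at $D$, identify the $C_2$-branch as the one-dimensional analytic unstable manifold of the saddle (eigenvalue $\la_2=(\ga-1)(1-\mu)$, no possible resonance), note that along it $\Delta$ vanishes to first order in $z=e^{\la_2 s}$ because the linear contribution of $-H$ dominates $(U-1)^2$, and thereby reduce the $y$-equation to a regular linear ODE in $z$ whose Picard solution gives finiteness and nonvanishing of $y_B=y(0)$ and, after inverting the analytic map $z\mapsto y-y_B$, the integer-power series in one stroke. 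Your version is heavier on invariant-manifold machinery but buys a structural explanation of why only integer powers appear at $B$ (in contrast with the exponent $\beta=\la_2/\la_1$ at $D$), whereas the paper's pinching argument is more elementary. Both rest on the same external inputs, Lemma \ref{B to D} for the existence of the connecting curve and the saddle structure at $B$, and you use them correctly.

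Two small caveats. First, carrying your Step 2 to the end gives $\frac{dU}{dy}(y_B)=\frac{(1-\ga)(1+k_2)}{\ga y_B}$ and $\frac{dH}{dy}(y_B)=\frac{(\ga-1)(\mu-1)}{y_B}$ (their ratio is $C_2(U_B,H_B)$, as it must be); this agrees with the paper's intermediate computation \eqref{Eq: 1st de at B} but is transposed relative to the coefficients displayed in \eqref{Eq: Asy at B}, which appears to be a typo in the statement rather than an error in your argument. Second, the Jacobian of $(F,G)$ at $B$ is lower-triangular in the $(H,U)$ ordering (it is $F_U$, not $G_H$, that vanishes at $B$ because $H_B=0$); this has no effect on the eigenvalues or on the rest of your argument.
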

\begin{proof}
	The negativity of $y_B$ follows from the fact that there are no critical points between $D$ and $B$. To establish the finiteness of $y_B$, we introduce the variable $z = \ln (-y)$. The finiteness of $y_B$ is then equivalent to proving the finiteness of $z_B$. For notational simplicity, define the function	
	\begin{align}\label{4.26}
		\frac{d{U^\text{BD}}}{dz}(z)= \frac{G(U^\text{BD}(z),H(U^\text{BD}(z)))}{\Delta(U^\text{BD}(z),H(U^\text{BD}(z)))}=:\mathcal G(U^\text{BD}(z)).
	\end{align}
	From \eqref{yD} and \eqref{U'(yD)}, $z=z_D$ is a removable singularity of $\mathcal G(U^\text{BD}(z))$. Additionally, since the integral curve along $C_2(U_B,H_B)$ satisfies $H(U_B) = C_2(U_B,H_B)(U-U_B)+o(U-U_B)$, we obtain the following:  
	\begin{equation}\label{Eq: 1st de at B}
	\begin{aligned}
		\frac{d{H^\text{BD}}}{dz}(z_B) &=\lim_{U\to U_B} \frac{2C_2(U_B,H_B)(U-U_B)[C_2(U_B,H_B)(U-U_B)-(U_B^2-k_1U_B+\mu)]}{(U-1)^2-C_2(U_B,H_B)(U-U_B)}  = (\ga-1)(\mu-1),\\
		 \frac{d{U^\text{BD}}}{dz}(z_B)  &=\lim_{U\to U_B} \cfrac{C_2(U_B,H_B)(U-U_B)(U+k_2)-U(U-1)(U-\mu)}{(U-1)^2-C_2(U_B,H_B)(U-U_B)} = \frac{(1-\ga)(1+k_2)}{\ga}.
	\end{aligned}
	\end{equation}
	Consequently, $z=z_B$ is also a removable singularity of $\mathcal G(U^\text{BD}(z))$. 
	Furthermore, since $\mathcal G(U^\text{BD}(z))<0$ and continuous in the interval $z\in[z_B,z_D]$ 
	, there exist constants $-\infty< m<M<0$ such that
	\[ m< \mathcal G(U^\text{BD}(z))<M, \]
	for any $z\in[z_B,z_D]$. We then integrate the ODE $\frac{dz}{dU} = \frac{\Delta(U,H(U))}{G(U,H(U))}$ from $U_B$ to $U_D$ and obtain
	\begin{align*}
	 \frac{U_D-U_B}{m} >	z_D - z_B = \int_{U_B}^{U_D}  \frac{1}{\mathcal G(U)} d U > \frac{U_D-U_B}{M}.
	\end{align*}
	Therefore, $z_B$ is finite, which establishes the finiteness of $y_B$. The expansions for $H^\text{BD}(y)$ and $U^\text{BD}(y)$ follow directly from the smoothness of the curves given by Lemma \ref{der at B} and the derivatives computed in \eqref{Eq: 1st de at B}. This concludes the proof.
\end{proof}

\section{Proof of Theorem \ref{main theorem}} \label{Sec 5}

The existence of $\rho$ and $u$ solving the vacuum free boundary Euler  \eqref{Euler}, \eqref{BC1}-\eqref{IC} with the left vacuum boundary $b(t)$ follows from the existence of the solution $(U(y),H(y))$ to the ODE system \eqref{ode system} which connects $C\to A \to E \to D \to B$. For $t<0$, the condition $h(t,x)=0$ if and only if $x=0$, corresponding to the point $C$. For $t>0$, the moving boundary is given by $y\equiv y_B$, as $H(y_B) = 0$ uniquely determines the first vanishing point of $h(t,x)$ for any $t>0$. Thus, the moving boundary \eqref{b(t)} is 
satisfied.

Property 1 follows directly from the self-similar ansatz. Property 3 is a consequence of Property 2 and Lemma \ref{der at B}, while Property 4 follows from $H^\text{BD}(y_B)=0$ and ${H^\text{BD}}'(y_B) >0$ as in Lemma \ref{Asy at B}.  To establish Property 2, it is sufficient to verify the regularity of the original physical variables in neighborhoods of the points $A$, $E$ and $D$, as these are the only critical 
points relevant to the ODE system \eqref{ode system}. To avoid confusion, let $U^{\text{sp}}_{P,R}$ and $U^{\text{sp}}_{P,L}$ denote the components of the special solution $U^{\text{sp}}$ in the first and second quadrants of the $(U,H)$ plane near the point $P$, where $P\in\{A,E\}$. At  the point $A$, by setting $z = y^{-\mu}$,  \eqref{formula special} is transformed  into 
\begin{align*}
z = \frac{a_XU^{\text{sp}}_{A,X}(z)}{K^\mu (U_C - U^{\text{sp}}_{A,X}(z))^{1-\mu}},
\end{align*}
where $X\in\{L,R\}$, with $a_R = 1$ and $a_L=-1$. It then follows that 
\begin{align*}
	U^\text{sp}_{A,R}(z) &=K^\mu U_C^{1-\mu}z+\sum_{n\geq 2} U_n z^n, \quad z\to 0,\\
	U^\text{sp}_{A,L}(z) &=-K^\mu U_C^{1-\mu}z+\sum_{n\geq 2} (-1)^{n}U_n z^n, \quad z\to 0.
\end{align*}
Substituting these expressions into the self-similar ansatz \eqref{2.14} and \eqref{2.15}, we obtain:
\begin{align*}
	&u(t,x) = -\delta K^\mu U_C^{1-\mu} x^{1-\mu}-\sum_{n\geq 2}\delta U_n x^{-n\mu+1}(-t)^{n-1},\quad \text{when }t<0\text{ and } x\gg |t|^\delta,\\
	&u(t,x) = -\delta K^\mu U_C^{1-\mu} x^{1-\mu}+\sum_{n\geq 2}(-1)^n\delta U_n x^{-n\mu+1}t^{n-1},\quad \text{when }t>0\text{ and } x\gg |t|^\delta.
\end{align*}
Hence, the solution is smooth 
in a neighborhood of any point in $\{(t,x)\mid |t|\ll 1\text{ and }x>0\}$. 

Near the point  $E$, we let $V = \frac{1}{U}$ and rewrite \eqref{formula special} as 
\begin{align*}
y= -KV^\text{sp}_{E,X} (1- \frac{2\mu}{\gamma+1}V^\text{sp}_{E,X} )^{\frac{1}{\mu}-1}\quad \text{for }y\sim y_E.
\end{align*}
By applying a similar argument as at the point $A$, we obtain 
\begin{align*}
	&V^\text{sp}_{E,L}(y) =\sum_{n\geq 1} V_n y^n, \quad y\to 0^+,\\
	&V^\text{sp}_{E,R}(y) =\sum_{n\geq 1} V_n y^n, \quad y\to 0^-,
\end{align*}
where $V_1\neq 0$, as indicated by \eqref{Uy0+}. 
Substituting these into the ansatz \eqref{2.15}, we derive
\begin{align*}
	u(t,x) = \frac{\delta x}{t} \frac{1}{V^\text{sp}_{E,X}(\frac{x}{t^\delta})}= \frac{\delta t^{\delta-1}}{ V_1  +\sum_{n\geq 2} V_n x^{n-1} t^{-(n-1)\delta}} \quad \text{ for }   |x| \leq \epsilon t^\delta  \ \text{and} \ t>0
\end{align*}
for sufficient small $\epsilon$.
This demonstrates that $u(t,x)$ is smooth 
 in the neighborhood of every point with $t>0$. Finally, the Lipschitz continuity at the point $D$ follows directly from Lemma \ref{Le: asy at D both sides}. 


\section*{Acknowledgments}
JJ and JL were supported in part by
the NSF grant DMS-2306910.

\end{document}